\def\mod{\mathrm{mod}\ }
    \newcommand{\BC}{{\mathbb {C}}} 
     \newcommand{\BH}{{\mathbb {H}}}
    \newcommand{\BQ}{{\mathbb {Q}}} \newcommand{\BR}{{\mathbb {R}}}
     \newcommand{\BZ}{{\mathbb {Z}}}
    \newcommand{\GL}{{\mathrm{GL}}}
    \newcommand{\PGL}{{\mathrm{PGL}}} 
    \renewcommand{\Re}{{\mathfrak{Re}\,}}
    \newcommand{\PSL}{{\mathrm{PSL}}}
\def\fra{\mathfrak{a}}
\def\frb{\mathfrak{b}}
\def\-{^{-1}}
\def\BCx{\mathbb C^\times}
\def\-{^{-1}}
\def\lp {\left (}
\def\rp {\right )}
\def\EJ{\EuScript J}
\def\Voronoi{Vorono\" \i \hskip 3 pt}
\newcommand{\delete}[1]{}
    \newcommand{\SL}{{\mathrm{SL}}}
    \newcommand{\SO}{{\mathrm{SO}}}
    \newcommand{\SU}{{\mathrm{SU}}}
     \newcommand{\Tr}{{\mathrm{Tr}}}
    \newcommand{\ds}{\displaystyle}
    \newcommand{\sstyle}{\scriptstyle}
    \newcommand{\ra}{\rightarrow}
    \theoremstyle{plain}
    \newtheorem{thm}{Theorem}[section] 
    \newtheorem{lem}[thm]{Lemma}
    \newtheorem {rem}[thm]{Remark}
    \numberwithin{equation}{section}
    \newtheorem{acknowledgement}{Acknowledgements}
\begin{document}

\title{On the Kuznetsov Trace Formula for $\PGL_2(\BC)$}

\author{ Zhi Qi}

\subjclass[2010]{11F72, 11M36}
\keywords{the Kuznetsov trace formula, spectral decomposition, Bessel functions}


\address{Department of Mathematics\\ Rutgers University\\Hill Center\\110 Frelinghuysen Road\\Piscataway, NJ 08854-8019\\USA}
\email{zhi.qi@rutgers.edu}

\begin{abstract}
In this note, using a representation theoretic method of Cogdell and Piatetski-Shapiro, we prove the  Kuznetsov trace formula for an {arbitrary} discrete group $\Gamma$ in $ \PGL_2 (\BC)$ that is cofinite but not cocompact. An essential ingredient is a kernel formula, recently proved by the author, on Bessel functions for $ \PGL_2 (\BC)$. This approach avoids the difficult analysis in the existing method due to Bruggeman and Motohashi.
\end{abstract}

\maketitle

\begin{footnotesize}
\tableofcontents
\end{footnotesize}

\section{Introduction}

In \hskip 1 pt the   paper \hskip 1 pt \cite{Kuznetsov}, Kuznetsov \hskip 1 pt discovered  his \hskip 1 pt trace formula for  $\PSL_2(\BZ)\backslash \BH^2 \cong \PSL_2(\BZ)\backslash \PSL_2(\BR)/K$, in which $\BH^2  $ denotes the hyperbolic upper half-plane and $K = \SO(2)/\{\pm 1\}$. There are two forms of his formula. The approach to the first formula is through a spectral decomposition formula for the inner product of {\it two} (spherical) Poincar\'e series. 
Then, using an inversion formula  for the Bessel transform, 
Kuznetsov obtained another form of his trace formula (the version in \cite{D-I-Kuz} is more complete in the sense that holomorphic cusp forms occurring in the Petersson trace formula are also involved).
On the geometric side, a  weighted sum of Kloosterman sums arises from computing the Fourier coefficients of Poincar\'e series. The spectral side involves the Fourier coefficients of holomorphic and Maa\ss\ cusp forms and Eisenstein series along with the Bessel functions associated with their spectral parameters. The second form plays a primary role in the investigation of Kuznetsov on sums of Kloosterman sums in the direction of the Linnik-Selberg conjecture.\footnote{It is Selberg  who introduced Poincar\'e series and realized the intimate connections between Kloosterman sums and the spectral theory of the Laplacian on $ \PSL_2(\BZ)\backslash \BH^2 $ in \cite{Selberg}.}

Along the classical lines, the Kuznetsov trace formula has been studied and generalized by many authors (see, for example, \cite{Bruggeman-1,Bruggeman-2, Pro-Kuz,  D-I-Kuz, BM-Kuz-TR}). Their ideas of generalizing the formula to the non-spherical case are essentially the same as Kuznetsov. 
It should however be noted that the pair of Poincar\'e series is chosen and spectrally decomposed in the space of a given $K$-type.

In the framework of representation theory, the second form of the Kuznetsov formula for an arbitrary Fuchsian group of the first kind $\Gamma \subset \PGL_2(\BR)$ was proved straightforwardly by Cogdell and Piatetski-Shapiro \cite{CPS}. Their computations use the Whittaker and Kirillov models of irreducible unitary representations of $\PGL_2(\BR) $. They observe that the Bessel functions occurring in the Kuznetsov  formula  should be identified with the Bessel functions for irreducible unitary representations of $\PGL_2(\BR) $ given by \cite[Theorem 4.1]{CPS}, in which the Bessel function $\EJ_{\pi} $ associated with a $\PGL_2 (\BR)$-representation $\pi$ satisfies the following kernel formula,
\begin{equation}\label{eq: Bessel function, Weyl element, real}
W \lp  
\begin{pmatrix}
a & \\
&  1
\end{pmatrix} 
\begin{pmatrix}
  & -1 \\
1 &  
\end{pmatrix}    \rp 
=  \int_{\BR^\times } \EJ_{\pi } ( a b) 
W 
\begin{pmatrix}
b & \\
& 1
\end{pmatrix} 
d^\times b,
\end{equation}
for all Whittaker functions $W$ in the Whittaker model of $\pi$.
Note that their idea of approaching  Bessel functions for $\GL_2(\BR)$  using local functional equations for $\GL_2 \times \GL_1$-Rankin-Selberg zeta integrals over $\BR$ (see \cite[\S 8]{CPS}) also occurs in \cite{Qi-Bessel}.

In  the book of  Cogdell and Piatetski-Shapiro \cite{CPS}, the Kuznetsov trace formula is derived  from computing the Whittaker functions (Fourier coefficients) of a ({\it single}) Poincar\'e series $P_f (g)$ in two different ways, first unfolding  $P_f (g)$ to obtain a weighted sum of Kloosterman sums, and  secondly spectrally expanding $P_f (g)$ in  $L^2 (\Gamma \backslash G)$ and then computing the Fourier coefficients of the spectral components in terms of basic representation theory of $\PGL_2 (\BR)$. The Poincar\'e series in \cite{CPS} arise from a very simple type of functions that are supported on the Bruhat open cell of $\PGL_2(\BR) $ and split in the Bruhat coordinates, which is surely not of a fixed $K$-type. In other words, \cite{CPS} suggests that, instead of the Iwasawa coordinates, it would be more pleasant to study the Kuznetsov   formula using the Bruhat coordinates. For this, \cite{CPS} works with the full spectral theorem rather than a version, used by all the other authors, that is restricted to a given $K$-type.

\vskip 5 pt

In the direction of generalization to other groups, Miatello and Wallach \cite{M-W-Kuz} gave the spherical Kuznetsov trace formula for real semisimple groups of real rank one, which include  both $\SL_2 (\BR)$ and $\SL_2 (\BC)$. It is however much more difficult to extend the formula to the non-spherical setting for $\SL_2 (\BC)$\footnote{This spherical formula was generalized to an arbitrary number field by Bruggeman and Miatello \cite{BM-Kuz-Spherical}.}. The first breakthrough is the work of Bruggeman and Motohashi \cite{B-Mo}, where the Kuznetsov trace formula for $\PSL_2( \BZ[i])\backslash \PSL_2( \BC)$ was found.\footnote{In her thesis \cite{B-Mo2}, Lokvenec-Guleska extended the  formula of \cite{B-Mo} to congruence subgroups of  $\SL_2 (\BC)$ over any imaginary quadratic field. Maga recently generalized the first form of the  formula to an arbitrary number field in \cite{Maga-Kuz}.} 

Let  
$\BH^3$ denote the three dimensional hyperbolic  space and let $K =  \SU(2)/ \{\pm 1\}$. 
Featuring a combination of the Jacquet and the Goodman-Wallach operators, the analysis carried out in \cite{B-Mo} is considerably hard. Nevertheless, similar to \cite{Kuznetsov}, the approach of  \cite{B-Mo} is also from considering the inner product of {\it two}  certain sophisticatedly chosen Poincar\'e series of a given $K$-type.  It is however remarked without proof in \cite[\S 15]{B-Mo} that their Bessel kernel should be interpreted as the Bessel function of an irreducible unitary representation of $\PSL_2(  \BC)$. 
 
\vskip 10 pt

In this note, we shall prove the Kuznetsov trace formula for an {arbitrary} discrete group $\Gamma$ in $ \PGL_2 (\BC)$ that is cofinite but not cocompact. An essential ingredient is a kernel formula for $\PGL_2 (\BC)$ (see \eqref{eq: Bessel function, Weyl element}) that is identical to \eqref{eq: Bessel function, Weyl element, real}. 
This is a consequence of the representation theoretic investigations on a general type of special functions, the (fundamental) Bessel kernels for $\GL_n (\BC)$, arising in the \Voronoi summation formula; see \cite[\S 18]{Qi-Bessel}. 
Our derivation of the Kuznetsov formula will be in parallel with that in \cite{CPS}, at least for the Kloosterman-spectral formula (see \S \ref{sec: the Kloosterman-spectral formula}). As such, this note should be viewed as the supplement and generalization of the  work of Cogdell and Piatetski-Shapiro over the complex numbers. With this method, we can avoid the very difficult and complicated analysis in \cite{B-Mo}.

Finally, we remark that the only reason for considering $\PGL_2 (\BC)$ is to keep notations simple. Without much difficulty, we may extend the Kuznetsov formula to $  \SL_2 (\BC)$. 
In another direction, we hope to implement the Cogdell-Piatetski-Shapiro method to $\PGL_2$ or $\SL_2$ over an arbitrary number field.

\begin{acknowledgement}
	The author would like to thank James W. Cogdell and Stephen D. Miller for valuable comments and helpful discussions. The author wishes to thank the anonymous referee for thorough reading of the manuscript and several suggestions which helped improving the paper.
\end{acknowledgement}


\section{Notations and Statement of Theorem} 

\subsection{ } We shall adopt the notations in \cite{CPS}. Let $G = \PGL_2 (\BC)$($=\PSL_2 (\BC)$). Let
\begin{align*}
N & = \left\{ \underline u = \begin{pmatrix}
1 & u \\ & 1
\end{pmatrix} : u \in \BC \right\} \subset G, \\
A & =  \left\{ \underline a = \begin{pmatrix}
a &   \\ & 1
\end{pmatrix} : a \in \BCx \right\} \subset G.
\end{align*}
Define $A_+  =  \left\{ \underline r \in A : r \in \BR_+ \right\} $.  Let $B =  N A $ denote the Borel subgroup of $G$. Let $\varw = \begin{pmatrix}
 & - 1 \\ 1 & 
\end{pmatrix}$ be the long Weyl element of $G$. Then we have the Bruhat decomposition $G = B \cup N \varw B$. Let 
$$K = \SU (2)  / \{\pm 1 \} = \left\{ k_{v,\, w} = \begin{pmatrix}
v & w \\
- \overline w & \overline v
\end{pmatrix} : |v|^2 + |w|^2 = 1 \right\} /\{\pm 1 \}.$$ We have the Iwasawa decomposition $G = N A_+ K$.

Let $\mathfrak g$ denote the Lie algebra of $G$ and $\mathfrak U (\mathfrak g)$ its universal enveloping algebra.

\subsection{ }

Let $\Gamma \subset G$ be a discrete subgroup that is cofinite  but not cocompact. 
Let $\mathcal C \subset \partial \BH^3$ denote the set of cusps of $\Gamma$. We assume that $ \infty \in \mathcal C$.  
For each cusp $\fra \in \mathcal C$, we fix $g_{\fra} \in G$ such that $ g_{\fra} \cdot \fra = \infty $. Let $P_{\fra}$ denote the parabolic subgroup of $G$ stabilizing $\fra$. Let $P_{\fra} = U_{\fra} M_{\fra}$ be its Langlands decomposition. Then the conjugation by $g_{\fra}$ provides an isomorphism $P_{\fra} \xrightarrow{\sim} B$, which induces isomorphisms $U_{\fra} \xrightarrow{\sim} N$ and $M_{\fra } \xrightarrow{\sim} A$. For each $\fra $,  define $\Gamma_{\fra}' = \Gamma \cap P_{\fra}$, $\Gamma_{\fra} = \Gamma \cap U_{\fra}$. Let $ g_{\fra} \Gamma_{\fra} g_{\fra}\-  = \left\{   \underline {u }  : u  \in \Lambda_{{\fra}} \right\} $, with $ \Lambda_{\fra}  $ a lattice in $\BC$. Let $|\Lambda_{\fra}| $ denote the area of $\Lambda_{\fra} \backslash \BC$. 
According to \cite[Theorem 2.1.8 (3)]{EGM}, we have $\Gamma_{\fra}' \cong \eta_{m_{\fra}} \ltimes \Lambda_{\fra}$, where $m_{\fra}  \in \left\{ 1, 2, 3, 4, 6 \right\}$, $\eta_{m_{\fra}}$ denotes the group of $m_{\fra}$-th roots of unity, and $\eta_{m_{\fra}} $ acts on $\Lambda_{\fra}$ by multiplication\footnote{ When $m_{\fra} =  4 $, respectively  $3$ or $6$, $\Lambda_{\fra}$ must be the ring of integers in the quadratic number field $\BQ (i)$, respectively $\BQ (\sqrt {-3})$.}. 



Let $\mathfrak X (\BCx)$ denote the group of multiplicative characters of $\BCx$. Every character $\mu \in \mathfrak X (\BCx)$ is of the form $\mu_{s, d} (z) =   | z |^{2 s} [z]^d$, with  $s \in \BC$, $d \in \BZ$ and the notation $ [z] = z/|z|$. Hence $\mathfrak X (\BCx)$   has a structure of a complex manifold, $\mathfrak X (\BCx) \cong \BZ \times \BC$. We define $\Re \mu_{s, d} = \Re (s)$. Each $\mu \in \mathfrak X (\BCx)$ defines a character of $A$, by $\mu (\underline a) = \mu (a)$, and a character of $B$ through $B \ra N \backslash B \cong A$. Through the isomorphisms give above, $ \mu$ also defines a character of $M_{\fra}$ and $P_{\fra}$. Let $\mathfrak X_m (\BCx)$ denote the set of characters $\mu_{s, d}$ such that $m | d$. Note that characters in  $\mathfrak X_{m_{\fra}} (\BCx)$ are trivial on $ \Gamma_{\fra}' $.

Let $\mathfrak X_{\infty} (\BC)$ denote the  group of additive characters  on $\Lambda_{\infty} \backslash \BC $. We define the dual lattice   $\Lambda_{\infty}'$ of $ \Lambda_{\infty} $ by $$\Lambda_{\infty}' = \left\{  \omega \in \BC :  \Tr (\omega \lambda) = \omega \lambda + \overline {\omega \lambda} \in \BZ,   \text{ for all } \lambda \in \Lambda_{\infty}    \right\}. $$
Then every $\psi \in \mathfrak X_{\infty} (\BC)$ is of the form $\psi_{\omega} (z) = e (\Tr (\omega z))$ for some $\omega \in \Lambda_{\infty}'$, with $e (x) = e^{2 \pi i x}$. Furthermore, each $\psi \in \mathfrak X_{\infty} (\BC)$ defines a character of $ \Gamma_{\infty} \backslash N $, by $\psi (\underline u) = \psi (u)$.

\subsection{ }

In the notations of \cite[\S 18.2]{Qi-Bessel}, the irreducible infinite dimensional unitary representations of $\PGL_2 (\BC)$ are
\begin{itemize}
	\item [-] (principal series) $\pi^+_d (i t)$ for $t \in \BR$ and $d \in \BZ$, with $ \pi^+_d (i t) \cong \pi^+_{- d} (- i t) $,
	\item [-] (complementary series) $  \pi (t)$ for $t \in \lp 0, \frac 12 \rp$.
\end{itemize}
In order to unify notations, let us put $\pi_d (i t) = \pi^+_d (i t)$ and $\pi_0 (t) = \pi (t)$ so that we may denote by $\pi_{d} (s)$ either   principal series for $s = it$ or complementary series for $s = t$ and $d = 0$.

\subsection{ }

Let $d z$ be {\it twice} the ordinary Lebesgue measure on $\BC$, and choose the standard multiplicative Haar measure $d^\times z = dz/|z|^2 $ on $\BCx = \BC \smallsetminus \{0\}$.

We take the Haar measure $dk$ on $K$ of total mass $1$. Writing an element of $G$ in the Iwasawa decomposition as $g = \underline z \hskip 1 pt \underline r \hskip 1 pt k $, 
with $\underline z   \in N$, $\underline r \in A_+ $ and $k \in K$, we let $d g = 2 r^{-3}  d z \hskip 1 pt d r  \hskip 1 pt  d k $. This Haar measure $d g$ on $G$ induces the hyperbolic measure on the hyperbolic space $\BH^3 = \left\{  z + r j : z \in \BC, r \in \BR_+ \right\}$. Moreover, if we use the Bruhat coordinates $g = \underline {u}_1  \varw \hskip 1 pt \underline {u}_2 \hskip 1 pt \underline a$ on the open Bruhat cell $N\varw B$, with $\underline u_1 , \underline u_2 \in N$ and $\underline a \in A$, then the measure $d g = (1/4 \pi^2)  d u_1 \hskip 1 pt d u_2 \hskip 1 pt d^\times a$. 

\subsection{Main Theorem}

\begin{thm}\label{thm: main}
	Let $F \in C^{\infty}_c (\BCx)$ be a smooth compactly supported function on $\BCx  $.   Let  $\omega_1, \omega_2 \in \Lambda_{\infty}' \smallsetminus \{0\}$ and $\psi_1 = \psi_{\omega_1}, \psi_2 = \psi_{\omega_2}$. Then
	\begin{equation*}
	\begin{split}
	\sum_{ c\in \Omega (\Gamma) } &  \frac {Kl_{\Gamma} (c; \psi_1, \psi_2 ) } {|c| } F \lp \frac {\omega_1 \omega_2} c \rp \\
	=  \ &
	  \pi    |\Lambda_{\infty}|^2  \sum_{ \sstyle \pi \in \Pi_d (\Gamma) \atop \sstyle \pi \cong \pi_d (s) } 
	\frac {  A_{\omega_2}(\varphi_{\pi}) \overline {A_{\omega_1 } (\varphi_{\pi})}   } 
	{  (2|d|+1)  G_{s, d}   }  \widehat F (s, d)  \\
	+ & \frac {1} { 4 }   \sum_{ \fra}  \frac 1 {m_{\fra} |\Lambda_{\fra}|} \sum_{ d\, \in\, m_{\fra} \BZ} \int_{\BR}  \mu_{it, d} \lp \frac {\omega_2} {\omega_1} \rp
	{ Z_{\Gamma}^{\fra} (\mu_{it, d} ; 0, \omega_2) \overline {Z_{\Gamma}^{\fra} (\mu_{it, d} ; 0, \omega_1) }   }   \widehat F (it , d)   \ d t,
	\end{split}
	\end{equation*}
	\delete{ where either $s = t$ is purely imaginary or $s = t$ is on the segment $\lp 0, \frac 1 2 \rp$,
		\begin{align*}
		G_{s, d} = 
		\begin{cases}
		1, & \text{ if } s = it, \\
		\ds \binom {2|d|} {|d|} \frac  { \Gamma (2  t + |d| + 1)^2} {  \Gamma (2|d| -2 t + 1)  \Gamma (2 t+1) }, & \text{ if } s = t.
		\end{cases}
		\end{align*}
		and
		\begin{equation*}
		J_{s, \pm 2 d} (z) = J_{- 2 s \mp d } \lp  z \rp J_{- 2 s \pm d  } \lp  {\overline z} \rp.
		\end{equation*}
	}
	where $Kl_{\Gamma} (c; \psi_1, \psi_2 )$ are the Kloosterman sums at infinity associated with $\Gamma$, $\psi_1$ and $\psi_2$ given in \S {\rm \ref{sec: Kloosterman sums}}, $A_{\omega} (\varphi_{\pi} )$ are the 
	Fourier coefficients of a certain $L^2$-normalized automorphic form in the space of the representation $\pi$ that occurs discretely in $L^2 (\Gamma \backslash G)$ {\rm(}see \S {\rm\ref{sec: compute the constants}}{\rm)}, $Z_{\Gamma}^{\fra} (\mu_{it, d} ; 0, \omega)$ are the Kloosterman-Selberg zeta functions  arising in the Fourier coefficients of Eisenstein series {\rm(}see \S {\rm\ref{sec: compute constants for Eisenstein series}}{\rm)}, 
	\begin{align*}
	G_{s, d} = 
	\begin{cases}
	1, & \text{ if } s = it, \\
	\ds    { \Gamma (1+2  t   ) } / {  \Gamma (1 -2 t  )   }, & \text{ if } s = t, d = 0,
	\end{cases}
	\end{align*}
	and finally $\widehat F (s, d)$ is the Bessel transform of $F$ given by
	\begin{equation*}
	\widehat F (s, d) = \frac 1 {\sin (2\pi s)} \int_{ \BCx} F (z)  \lp J_{ s, 2 d} (4 \pi   \sqrt { z}) -  J_{- s, - 2 d} (4 \pi   \sqrt { z}) \rp d^\times z,
	\end{equation*}
	with
	\begin{equation*}
	J_{s,   2 d} (z) = J_{- 2 s - d } \lp  z \rp J_{- 2 s + d  } \lp  {\overline z} \rp,
	\end{equation*}
	and $J_{\nu} (z)$   the classical Bessel function of the first kind.
\end{thm}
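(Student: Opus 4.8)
The plan is to follow the single--Poincar\'e--series strategy of Cogdell and Piatetski-Shapiro, adapted to the complex place. Fix a function $f\in C^\infty_c(G)$ supported on the open Bruhat cell $N\varw B$ that splits in the Bruhat coordinates $g=\underline u_1\varw\underline u_2\underline a$ as a product of $\psi_1(u_1)$, a character in the $\underline u_2$ variable, and $F$ in the $\underline a$ variable; the radial datum $F$ is exactly the test function of the theorem. Since $\psi_1=\psi_{\omega_1}$ is trivial on $\Lambda_\infty$, the series $P_f(g)=\sum_{\gamma\in\Gamma_\infty\backslash\Gamma}f(\gamma g)$ is well defined and, by the compact and big--cell support of $f$, converges absolutely and lies in $L^2(\Gamma\backslash G)$. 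The whole argument consists of computing the $\psi_2$--Fourier coefficient of $P_f$ at the cusp $\infty$, namely $\int_{\Gamma_\infty\backslash N}P_f(\underline u\,g)\overline{\psi_2(u)}\,du$, in two ways: a geometric unfolding that produces the left-hand side, and a spectral expansion that produces the right-hand side.

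For the geometric side I would unfold the Poincar\'e series against $\overline{\psi_2}$. The terms with $\gamma\in\Gamma_\infty$ drop out because $f$ is supported on the big cell, and the surviving $\gamma$ are organized by the Bruhat parameter $c$, the lower--left entry modulo $\Lambda_\infty$ on both sides. The sum over the two unipotent parameters attached to a fixed $c$ assembles exactly into the Kloosterman sum $Kl_\Gamma(c;\psi_1,\psi_2)$, while the splitting of $f$ makes the residual integral over $\Gamma_\infty\backslash N$ collapse, so that the radial datum is evaluated at the single point $\omega_1\omega_2/c$ and the Jacobian of the Bruhat change of variables contributes the factor $1/|c|$. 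This yields the left-hand side $\sum_{c\in\Omega(\Gamma)}|c|^{-1}Kl_\Gamma(c;\psi_1,\psi_2)F(\omega_1\omega_2/c)$.

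For the spectral side I would insert the full spectral decomposition of $L^2(\Gamma\backslash G)$ into $P_f$, rather than a fixed $K$--type version, so that the Bruhat coordinates are respected. For each automorphic representation $\pi\cong\pi_d(s)$ occurring discretely, the pairing $\langle P_f,\varphi_\pi\rangle$ unfolds to an integral of $f$ against $\overline{\varphi_\pi}$ over $\Gamma_\infty\backslash G$; substituting the Whittaker expansion of $\varphi_\pi$ at $\infty$ reduces this to an integral of $F$ against the $\psi_1$--Whittaker function over the cell. The key device is now the kernel formula for $\PGL_2(\BC)$, the exact analogue of \eqref{eq: Bessel function, Weyl element, real}, which rewrites the Weyl--element value of the Whittaker function as the Bessel transform of its Kirillov restriction against $\EJ_\pi$; this turns the integral precisely into $\overline{A_{\omega_1}(\varphi_\pi)}\,\widehat F(s,d)$, the combination $(J_{s,2d}-J_{-s,-2d})/\sin(2\pi s)$ appearing in $\widehat F$ being the Bessel function $\EJ_\pi$ of the representation. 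Pairing this with the $\psi_2$--Fourier coefficient $A_{\omega_2}(\varphi_\pi)$ of the same form and summing over $\pi\in\Pi_d(\Gamma)$ produces the discrete spectral sum; running the identical computation through the Eisenstein series attached to each cusp $\fra$ produces the continuous part, the role of the Fourier coefficients being played by the Kloosterman--Selberg zeta functions $Z_\Gamma^\fra(\mu_{it,d};0,\omega)$, with the spectral measure supplying the integral over $t$ and the sum over $d\in m_\fra\BZ$.

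The main obstacle is not the structure but the bookkeeping of the archimedean normalizing constants. Matching the two computations forces a comparison of three normalizations of a vector in $\pi_d(s)$: its $L^2$ norm, fixed to $1$ so that $A_\omega(\varphi_\pi)$ is the Fourier coefficient of an $L^2$--normalized form; the value of its Kirillov function; and the normalization of the Bessel kernel in the kernel formula. The ratios between these are exactly the factors $G_{s,d}$, the dimension factor $2|d|+1$, and the constants $\pi|\Lambda_\infty|^2$ and $\tfrac14(m_\fra|\Lambda_\fra|)^{-1}$; pinning them down requires the explicit evaluation of the archimedean $\GL_2\times\GL_1$ local zeta integral over $\BC$, equivalently the local inner product of Whittaker functions, together with careful tracking of the Haar and spectral measures normalized in \S 2.4. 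A secondary analytic point, mild here because $f$ is smooth and compactly supported on the big cell, is justifying the absolute convergence of $P_f$ and the interchange of the spectral expansion with the Fourier integral at the cusp.
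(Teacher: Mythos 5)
There is a genuine gap, and it sits at the very foundation of your construction: the choice of the function $f$ generating the Poincar\'e series. You take $f$ supported on the big cell splitting as $\psi_1(u_1)$ times ``a character in the $\underline u_2$ variable'' times $F(a)$, and simultaneously claim $f\in C^\infty_c(G)$ with ``the radial datum $F$ exactly the test function of the theorem.'' These requirements are incompatible: a character in $u_2$ is nowhere decaying, so $f$ is not compactly supported (nor even integrable modulo $N$), the convergence and square-integrability of $P_f$ can no longer be argued from ``compact and big-cell support,'' and, worse, the unfolded local integral at each $c$ degenerates. Concretely, with $f(\underline u_1\varw\,\underline u_2\,\underline a)=\psi_1(u_1)\psi_2(u_2)F(a)$ the Kloosterman--Jacquet integral becomes $\int_{\BC}\psi_2(cu)F(c)\,\psi_2^{-1}(u)\,du=F(c)\int_{\BC}\psi_2((c-1)u)\,du$, which diverges (it is a delta distribution in $c$). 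This is precisely why the paper does \emph{not} put a character in the $u_2$-slot: it takes $f_{\eta,\nu}(\underline u_1\varw\,\underline u_2\,\underline a)=\psi_1(u_1)\,\eta(u_2)\,\nu(a)$ with $\eta\in\mathscr S(\BC)$ Schwartz and $\nu\in C^\infty_c(\BCx)$, verifies $f_{\eta,\nu}\in S_{\alpha,1}(N\backslash G;\psi_1)$, and then the geometric side produces $\frac1{|c|^2}\nu(c)\widehat\eta\lp\frac1c\rp$ rather than a pointwise evaluation of $F$.

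A consequence you also miss is that the test function of the theorem is \emph{not} the radial datum of $f$: the Kloosterman--spectral formula (the paper's Theorem \ref{thm: first formula}) holds for the weight $A_{\eta,\nu}(z)=\nu(1/z)\widehat\eta(z)$, and the main theorem is only obtained at the very end by the substitution $F(z)=A_{\eta,\nu}\lp z/\omega_1\omega_2\rp|z/\omega_1\omega_2|$ (the extra factor $|z|$ is what turns the Jacobian $|c|^{-2}$ into the $|c|^{-1}$ of the statement), together with the observation that such $F$ exhaust $C^\infty_c(\BCx)$ as $\eta$ and $\nu$ vary. Without this two-parameter family and the final exhaustion argument, the formula is proved for no admissible test function at all. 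Your spectral side, by contrast, is essentially the paper's: spectral decomposition respecting the Bruhat coordinates, the kernel formula \eqref{eq: Bessel function, Weyl element} converting Weyl-element values into Bessel transforms against $\EJ_{\pi,\psi_1}$, and the bookkeeping of the constants $c_{\psi}(\pi,\Gamma)$, $c(\pi;\psi_2/\psi_1)$ via Jacquet's Whittaker functions and the Weber--Schafheitlin integral. One last caution: the analytic issues you call ``mild'' (membership of $P_f$ in $S(\Gamma\backslash G)$, inner products with non-$L^2$ Eisenstein series, interchanging the spectral expansion with the Fourier integral) are not mild even for the correct $f_{\eta,\nu}$; the paper handles them with Eisenstein-series majorants, a Sobolev bound for $\varphi\mapsto W_{\varphi,\psi}(1)$, and Dixmier--Malliavin, and notes that the corresponding arguments in \cite{CPS} themselves contain errors needing repair.
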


\begin{rem}
	When $\Gamma$ is a congruence group  for an imaginary quadratic field, the automorphic forms $\varphi_{\pi}$ will usually be chosen to be common eigenfunctions of Hecke operators. 
	Furthermore, according to \cite{Gelbart-Jacquet}, there will be no residual spectrum if $\Gamma$ is congruence. 
	
	Note that our definition of Kloosterman sums is slightly different from the usual one. Taking the simplest example of the full modular group $ \Gamma = \PSL_2 (\mathfrak {O} ) $, with $\mathfrak {O} $ the ring of integers of an imaginary quadratic field, then for $c  \in \mathfrak {O} \smallsetminus \{0\} $ and $\omega_1, \omega_2 \in \mathfrak {O}'  \smallsetminus \{0\} $ we have
	\begin{align*}
	Kl_{\Gamma} (c^2; \psi_1, \psi_2) = \sum_{\sstyle a_1,\, a_2 \in \mathfrak {O}/ c  \mathfrak {O} \atop \sstyle a_1 a_2 \equiv 1 (\mod c ) } e \lp \Tr \lp \frac {\omega_1 a_1 + \omega_2 a_2}     c  \rp \rp.
	\end{align*}
	
	For $\Gamma = \PSL_2 (\BZ[i] )$ we obtain the summation formula in \cite{B-Mo}.
\end{rem}

\section{\texorpdfstring{Spectral Analysis of $L^2 (\Gamma \backslash G)$}{Spectral Analysis of $L^2 ( \Gamma  G)$}}
The spectral decomposition of $L^2 (\Gamma \backslash G)$ is a consequence of the general theory of Eisenstein series due to Langlands in \cite{Langlands-Eisenstein}. Here we shall follow the expositions in \cite{CPS}.

Let $L^2 (\Gamma \backslash G) $ be the space of all square integrable functions on $\Gamma \backslash G$ with respect to the measure induced by the Haar measure on $G$. Let $(\cdot, \cdot)$ be the (Petersson) inner product on $L^2 (\Gamma \backslash G)$. $G$ acts on this space by right translation.

Let $L^2_0 (\Gamma \backslash G)$ denote the space of $L^2$-cusp forms. We have a decomposition 
\begin{equation*}
L^2  (\Gamma \backslash G) = L^2_0 (\Gamma \backslash G) \oplus L^2_e (\Gamma \backslash G),
\end{equation*}
where $L^2_e (\Gamma \backslash G)$ is the orthogonal complement of  $ L^2_0 (\Gamma \backslash G)$.

\subsection{Spectral Decomposition on $L^2_0 (\Gamma \backslash G)$}

 It is a well known theorem of Gelfand and Piatetski-Shapiro  that $L^2_0 (\Gamma \backslash G)$ decomposes into a discrete countable direct sum of irreducible unitary representations $(\pi, V_\pi)$ of $G$, each isomorphism classes occurring with finite multiplicity (see  \cite{G-G-PS,Harish-Chandra,Langlands-Eisenstein}).
We let $\Pi_0(\Gamma)$ be the set of irreducible constituents of  $L^2_0 (\Gamma \backslash G)$.

\subsection{Eisenstein Series}

For each $\mu \in \mathfrak X(\BCx)$ and cusp $\fra \in \mathcal C$, let $V_{\fra} (\mu)$ denote the Hilbert space of functions $f : G \ra \BC$ such that
\begin{itemize}
	\item[-] $f (p g) = \mu (p) \delta_{\fra}^{\frac 1 2} (p) f (g), \hskip 10 pt p \in P_{\fra}$,
	\item[-] $f$ is square integrable on $K$.
\end{itemize}
Here $\delta_{\fra}$ is the modulus character of $P_{\fra}$ acting on $U_{\fra}$ by conjugation. For the matrix $p = g_{\fra}\- \underline {u} \hskip 1 pt \underline {a} g_{\fra}$ in $P_{\fra}$, we have $\delta_{\fra} (p) = |a|^2$.
$G$ acts on this space by right translation. We denote this representation by $\pi_{\fra} (\mu)$. There is a non-degenerate $G$-invariant Hermitian paring on $V_{\fra} (\mu) \times V_{\fra} (\overline \mu\-)$ given by
\begin{equation*}
\langle f , f' \rangle_{\fra } = \int_{P_{\fra} \backslash G} f  (g) \overline {f' (g)} d g, \hskip 10 pt f  \in V_{\fra} (\mu), \ f' \in V_{\fra} (\overline \mu\-).
\end{equation*}
Note that if $\Re (\mu) = 0$ then $\pi_{\fra} (\mu)$ is unitary with respect to this inner product.

For $f \in V_{\fra}$, we form the Eisenstein series
\begin{equation*}
E_{\fra } (g; f, \mu) = \sum_{\gamma \in \Gamma_{\fra} \backslash \Gamma } f (\gamma g).
\end{equation*}
Note that $E_{\fra } (g; f, \mu) \equiv 0$ if $\mu \notin \mathfrak X_{m_{\fra}}(\BCx)$. It converges uniformly and absolutely on compact subsets of $G$ for $\Re \mu > \frac 1 2$.  
$E_{\fra } (g; f, \mu) $ is analytic in $\mu$ and admits a meromorphic continuation to all $\mu \in \mathfrak X (\BCx)$.
Furthermore, there exist intertwining operators 
\begin{align*}
M (\fra, \frb; \mu) : V_{\fra} (\mu) \ra V_{\frb} (\mu\-)
\end{align*}
such that the Eisenstein series satisfy the functional equation \begin{equation}\label{eq: functional equation for Eisenstein}
E_{\fra} (g; f,\mu) = \sum_{\frb} E_{\frb} (g; M (\fra, \frb; \mu) f, \mu\-).
\end{equation}

\subsection{The Residual Spectrum} 
The poles of the $E_{\fra} (g; f,\mu)$ in $\Re \mu \geq 0$, which are identical with the poles of $M (\fra, \frb; \mu)$, are all simple and their $s$-coordinates lie in a finite subset of the segment $ \left(0,\frac 1 2\right]$. The residues of these Eisenstein series at such a pole form a non-cuspidal irreducible representation occurring discretely in $L^2 (\Gamma \backslash G)$. The point $\mu = \delta_{\fra}^{\frac 1 2}$ always yields the trivial representation, which represents the constant functions, whereas all  the other components in the residual spectrum are complementary series. Let $\Pi_r (\Gamma)$ denote the representations $(\pi, V_\pi)$ which so occur discretely in $L^2 (\Gamma \backslash G)$ coming from the residues of all Eisenstein series $E_{\fra} (g; f, \mu)$. 

\subsection{The Continuous Spectrum} It is known that $L^2_e (\Gamma \backslash G)$ is the closure of the space spanned by incomplete Eisenstein series\footnote{Here we follow the terminologies and notations of \cite{Iw-Spectral} rather than \cite{CPS}. }  $E_{\fra} (g; f)$ of the form
\begin{equation*}
E_{\fra} (g; f) = \sum_{\Gamma_\fra \backslash \Gamma} f (\gamma g),
\end{equation*}
with $f \in C_c^\infty (U_{\fra} \backslash G)$. For $f \in C_c^\infty (U_{\fra} \backslash G)$, $E_{\fra} (g; f)$ is compactly supported on $\Gamma \backslash G$, then we may form the inner product
\begin{equation*}
(E_{\fra} (\cdot; f), E_{\frb} (\cdot; f', \mu) )
\end{equation*}
with the Eisenstein series for $f' \in V_{\frb} (\mu)$ with $\Re \mu = 0$. This linear functional will be presented by some   $\Phi_{\pi_{\frb} (\mu)} (E_{\fra} (\cdot; f)) \in V_{\frb} (\mu)$ in the sense that
\begin{equation*}
(E_{\fra} (\cdot; f), E_{\frb} (\cdot; f', \mu) ) = \langle \Phi_{\pi_{\frb} (\mu)} (E_{\fra} (\cdot ; f)), f' \rangle_{\frb}, \hskip 10 pt \text{   all } f' \in V_{\frb} (\mu).
\end{equation*}
Therefore $\Phi_{\pi_{\frb} (\mu) }$ gives a $G$-intertwining projection  $L_e^2 (\Gamma\backslash G) \ra V_{\frb} (\mu)$.

\subsection{The Spectral Decomposition of $L^2 (\Gamma \backslash G)$}

For convenience, let $\Pi_d (\Gamma)  = \Pi_0 (\Gamma) \cup \Pi_r (\Gamma)$ denote the complete discrete spectrum of $L^2 (\Gamma \backslash G)$.

\begin{thm} 
	The spectral decomposition of $L^2 (\Gamma \backslash G)$ is 
	\begin{equation*}
	L^2 (\Gamma \backslash G) = \lp \bigoplus _{\pi \in \Pi_d (\Gamma) } V_{\pi} \rp \oplus \sum_{\fra} \frac 1 {4 \pi i} \frac 1 {m_{\fra} |\Lambda_{\fra}|} \int_{ \Re \mu = 0} V_{\fra} (\mu) d \mu,
	\end{equation*}
	where  the sections of the continuous spectrum must satisfy the functional equation \eqref{eq: functional equation for Eisenstein}  and the integral over $d \mu$ is on the vertical lines $\mathfrak X_{m_{\fra}} (\BCx) \cong m_{\fra} \BZ \times \{ s \in \BC : \Re s = 0 \}$. 
	
	To be concrete, for a function $\varphi \in L^2 (\Gamma \backslash G) $,  we may  write
	\begin{equation}\label{eq: spectral decomposition of varphi}
	\varphi = \sum_{ \pi \in \Pi_d (\Gamma) } F_{\pi} (\varphi) + \sum_{ \fra} \frac 1 {4 \pi i} \frac 1 {m_{\fra} |\Lambda_{\fra}|} \int_{\Re \mu = 0}  F_{\pi_{\fra} (\mu) } (\varphi) d \mu.
	\end{equation}
	For $\pi \in \Pi_d (\Gamma)$, $F_{\pi} (\varphi)$ is the unique element in $V_{\pi}$ satisfying 
	\begin{equation}\label{eq: def of F (phi)}
	(F_{\pi} (\varphi), \varphi' ) = (\varphi, \varphi'), \hskip 10 pt \text{   all } \varphi' \in V_{\pi}.
	\end{equation}
	Let $\Phi_{\pi_{\fra} (\mu)}$ be the $G$-intertwining projection from $L^2_e (\Gamma \backslash G) \ra V_{\fra} (\mu)$, which extends onto $L^2 (\Gamma \backslash G) $, then
	\begin{equation}\label{eq: def F Eisenstein}
	F_{\pi_{\fra} (\mu)} (\varphi) = E_{\fra} (g; \Phi_{\pi_{\fra} (\mu)} (\varphi), \mu ).
	\end{equation}
\end{thm}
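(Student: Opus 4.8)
The plan is to deduce the decomposition from Langlands' general theory of Eisenstein series, following the exposition of Cogdell and Piatetski-Shapiro, and to pin down the explicit constants in the rank-one situation at hand. First I would fix the orthogonal decomposition $L^2(\Gamma\backslash G) = L^2_0(\Gamma\backslash G) \oplus L^2_e(\Gamma\backslash G)$. On the cuspidal part the work is already done: by the theorem of Gelfand and Piatetski-Shapiro, $L^2_0(\Gamma\backslash G)$ decomposes as a countable Hilbert direct sum $\bigoplus_{\pi\in\Pi_0(\Gamma)} V_\pi$ with finite multiplicities, contributing the discrete summands indexed by $\Pi_0(\Gamma)$. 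For the projection $F_\pi(\varphi)$ onto an irreducible closed invariant subspace $V_\pi$ with $\pi$ in the discrete spectrum, I would take $F_\pi(\varphi)$ to be the orthogonal projection of $\varphi$ onto $V_\pi$; the defining property \eqref{eq: def of F (phi)} is then immediate from self-adjointness of the projection, and $G$-equivariance is automatic since $V_\pi$ is $G$-invariant.

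The heart of the matter is the continuous part $L^2_e(\Gamma\backslash G)$, which by construction is the closure of the span of the incomplete Eisenstein series $E_\fra(g;f)$ with $f\in C_c^\infty(U_\fra\backslash G)$. Here I would spectrally expand a generating incomplete Eisenstein series by the standard Mellin expansion: writing $f$ in terms of its behaviour along $A$ and inserting the Mellin decomposition over the characters $\mu\in\mathfrak X_{m_{\fra}}(\BCx)$, one recognizes $E_\fra(g;f)$ as a contour integral of genuine Eisenstein series $E_\fra(g;\,\cdot\,,\mu)$ over $\Re\mu > \frac12$, where the series converge absolutely. The key step is then to shift the contour to the unitary axis $\Re\mu = 0$. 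The analytic continuation and the functional equation \eqref{eq: functional equation for Eisenstein}, both furnished by Langlands, guarantee that this shift is legitimate and picks up, at each simple pole of the intertwining operators $M(\fra,\frb;\mu)$ in $0<\Re\mu\le\frac12$, a residue. These residues are exactly the residual representations comprising $\Pi_r(\Gamma)$, while the remaining integral over $\Re\mu=0$ produces the continuous part. Combining with the cuspidal piece and setting $\Pi_d(\Gamma)=\Pi_0(\Gamma)\cup\Pi_r(\Gamma)$ yields the asserted decomposition, and the characterization \eqref{eq: def F Eisenstein} of the Eisenstein projection follows from the defining property of the intertwining projection $\Phi_{\pi_\fra(\mu)}$ together with the meromorphic continuation of $E_\fra$.

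What remains, and is the genuinely $\PGL_2(\BC)$-specific content, is the verification of the two normalizing constants. The factor $\frac{1}{4\pi i}$ is the Plancherel density of the unitary principal series along $\Re\mu=0$, read off from the Maass--Selberg relations for the inner product of two incomplete Eisenstein series; concretely, it emerges when one computes $(E_\fra(\cdot;f),E_\frb(\cdot;f'))$ by the residue/contour argument and matches the diagonal $\fra=\frb$ term against the Mellin inversion normalization. The factor $\frac{1}{m_\fra|\Lambda_\fra|}$ encodes the local geometry of the cusp $\fra$: the volume $|\Lambda_\fra|$ of $\Lambda_\fra\backslash\BC$ enters through the measure on $\Gamma_\fra\backslash N$ used to define the Eisenstein series, while the integer $m_\fra\in\{1,2,3,4,6\}$ arises because $\Gamma_\fra'\cong\eta_{m_\fra}\ltimes\Lambda_\fra$ forces the summation over $\Gamma_\fra\backslash\Gamma$ to see only characters $\mu_{s,d}$ with $m_\fra\mid d$, that is $\mu\in\mathfrak X_{m_{\fra}}(\BCx)$, and introduces the extra factor $1/m_\fra$ when passing between $\Gamma_\fra$ and $\Gamma_\fra'$.

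I expect the main obstacle to be the careful bookkeeping in the contour-shift and Maass--Selberg computation: ensuring that the residual terms are correctly matched with the representations of $\Pi_r(\Gamma)$ (all simple poles, $s$-coordinate in $(0,\frac12]$, the point $\mu=\delta_\fra^{\frac12}$ giving the trivial representation), and that the spectral measure on $\Re\mu=0$ comes out with precisely the normalization $\frac{1}{4\pi i}\frac{1}{m_\fra|\Lambda_\fra|}$ rather than an off-by-a-constant variant. Since the abstract existence, continuation and functional equation are all imported from \cite{Langlands-Eisenstein}, the real labor is exactly this constant-tracking, for which the complex place contributes the additional roots-of-unity subtlety absent over $\BR$.
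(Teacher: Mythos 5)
Your proposal is correct and follows essentially the same route as the paper, which offers no detailed proof of this theorem but instead defers to Langlands' general theory and to the classical treatments (\cite[Chapter 6]{EGM}, \cite{Gelbart-Jacquet}, \cite{Iw-Spectral} for $\PSL_2(\BR)$) --- exactly the incomplete-Eisenstein-series, contour-shift and Maass--Selberg argument you sketch, with the same division of labor between imported analytic continuation and rank-one constant tracking. Note also that your Mellin decomposition over all characters $\mu_{s,d}$ of $\BCx$ already incorporates the ``Fourier series expansion on the circle $B \cap \SU(2)/\{\pm 1\}$'' that the paper singles out as the extra step needed to pass from the spherical references to the full $L^2(\Gamma \backslash G)$.
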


The spectral decomposition theorem in general is   due to Langlands (\cite{Langlands-Eisenstein}). For $ \PGL_2 (\BC)$, we may however prove the theorem following \cite[Chapter 6]{EGM},   \cite{Gelbart-Jacquet} or \cite{Iw-Spectral} for $\PSL_2 (\BR)$. A Fourier series expansion on the circle $  B \cap \SU(2)/ \{\pm 1 \} \cong \SU (1) / \{ \pm 1 \} $ is needed at the end.

Let $S (\Gamma \backslash G)$ be the space of smooth vectors in $L^2 (\Gamma\backslash G)$, which is endowed with a finer topology than that induced by the $L^2$-norm.  This topology may be defined by the seminorms $\nu_X (\varphi) = \| R(X) \varphi\|_2 $ for all $X \in  \mathfrak U (\mathfrak g) $. Recall that $\mathfrak g$ is the Lie algebra of $G$ and $ \mathfrak U (\mathfrak g)$ is the universal enveloping algebra. Let $V^{\infty}_{\pi}$ and $V_{\fra} (\mu)^{\infty}$ denote the spaces of smooth vectors in $V_{\pi} $ and $V_{\fra} (\mu)$, respectively. As consequences of the theorem of Dixmier-Malliavin \cite{Dix-Mal}, for $\varphi \in S (\Gamma \backslash G) $, each of its spectral components is smooth and the spectral decomposition \eqref{eq: spectral decomposition of varphi} converges in $S (\Gamma \backslash G)$ (see \cite[Proposition 1.3, 1.4]{CPS}). 

\subsection{The Whittaker-Spectral Decomposition}

	Let $\psi \in \mathfrak X_{\infty} (\BCx) \smallsetminus \{ 1 \}$ and  $\varphi \in S (\Gamma \backslash G)$. 
	We define the $\psi$-Whittaker function associated with $\varphi $ as
	\begin{equation*}
	W_{\varphi,\, \psi} (g) = \int_{ \Gamma_{\infty} \backslash N} \varphi (n g) \psi\- (n) d n.
	\end{equation*}

\delete{	 {\rm (2).} For $\chiup \in \mathfrak X (\BCx)$, we define the Mellin-Whittaker transform of $\varphi$ as
	\begin{equation*}
	M_{\psi} (\varphi,\, \chiup) = \int_{\BCx} W_{\varphi,\, \psi} \lp \begin{pmatrix}
	a & \\ & 1
	\end{pmatrix} \rp \chiup (a) d^\times a
	\end{equation*}
	when the integral converges.}

Let $ C (N\backslash G; \psi)$ denote the space of continuous functions $f$ on $G$ such that $f (n g) = \psi (n) f (g)$ for all $n \in N$. Let $ S (N\backslash G; \psi)$ denote the space of smooth vectors in $ C (N\backslash G; \psi)$. It is clear that if $\varphi \in S (\Gamma \backslash G)$ then $W_{\varphi,\, \psi} \in S (N\backslash G; \psi)$.

Using Sobolev's lemma (\cite[\S 5.6.3 Theorem 6]{Evans-PDE}), we may prove the following analogue of \cite[Lemma 1.1]{CPS}. For all $\varphi \in S (\Gamma \backslash G) $ we have
\begin{equation*}
\left| W_{\varphi,\, \psi} (\underline {a} ) \right| \lll (|a|^4 + |a|^{-4} ) \sum_{\sstyle X = \boldsymbol{X}^{\boldsymbol{\alpha} } \atop \sstyle |\boldsymbol{\alpha} | \leqslant 4} \| R(X) \varphi \|_2, 
\end{equation*}
where, on choosing a basis $\left\{ X_{l} \right\}_{l=1}^6 $ of $\mathfrak g$, $\boldsymbol{X}^{\boldsymbol{\alpha}} = \prod_{l = 1}^6 X_l^{\alpha_l} \in \mathfrak U (\mathfrak g)$ (the order of $X_l$ in the product is fixed), $|\boldsymbol{\alpha} | = \sum_{l=1}^6 \alpha_l  $, and the implied constant depends only on $\Gamma$.
Consequently, the linear functional $\varphi \mapsto W_{\varphi,\, \psi} (1)$ is continuous on $S (\Gamma \backslash G)$ with its natural topology.

\begin{thm}\label{thm: Whittaker}
	Let $\psi \in \mathfrak X_{\infty} (\BCx) \smallsetminus \{ 1 \}$. Suppose $\varphi \in S  (\Gamma \backslash G)$ has spectral expansion as in \eqref{eq: spectral decomposition of varphi}.
	Then
	\begin{equation}\label{eq: Whittaker spectral decomposition}
	\displaystyle W_{\varphi,\, \psi} (1) =  \sum_{ \pi \in \Pi_d (\Gamma) } W_{ F_{\pi} (\varphi),\, \psi } (1) + \sum_{ \fra} \frac 1 {4 \pi i} \frac 1 {m_{\fra} |\Lambda_{\fra}|} \int_{\Re \mu = 0}  W_{F_{\pi_{\fra} (\mu) } (\varphi) ,\, \psi} (1) d \mu,
	\end{equation}
	with $\mu \in \mathfrak X_{m_{\fra}} (\BCx)$.
	
	\delete{{\rm (2).} For $\Re \chiup > 2$, we have
	$$ M_{\psi}  (\varphi, \chiup) =  \sum_{ \pi \in \Pi_d (\Gamma) } M_{\psi}  (  F_{\pi} (\varphi), \chiup) + \sum_{ \fra} \frac 1 {4 \pi i} \int_{\Re \mu = 0}  M_{\psi}  ( F_{\pi_{\fra} (\mu) } (\varphi), \chiup) d \mu.$$}
\end{thm}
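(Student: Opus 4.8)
The plan is to obtain \eqref{eq: Whittaker spectral decomposition} by applying the linear functional $\ell : \varphi \mapsto W_{\varphi,\,\psi}(1)$ to the spectral expansion \eqref{eq: spectral decomposition of varphi} term by term. Two ingredients, both already in place above, make this legitimate. First, the spectral decomposition \eqref{eq: spectral decomposition of varphi} converges in the topology of $S(\Gamma\backslash G)$ defined by the seminorms $\nu_X$, and each spectral component $F_{\pi}(\varphi)$ and $F_{\pi_{\fra}(\mu)}(\varphi)$ is itself a smooth vector, hence lies in $S(\Gamma\backslash G)$ and may be fed into $\ell$ with the right-hand quantities $W_{F_{\pi}(\varphi),\,\psi}(1)$ and $W_{F_{\pi_{\fra}(\mu)}(\varphi),\,\psi}(1)$ well defined. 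Second, the Sobolev estimate established just above shows that $\ell$ is continuous on $S(\Gamma\backslash G)$ with its natural topology, i.e. $|\ell(\varphi)| = |W_{\varphi,\,\psi}(1)| \lll \sum_{|\boldsymbol{\alpha}|\leqslant 4} \nu_{\boldsymbol{X}^{\boldsymbol{\alpha}}}(\varphi)$.

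First I would treat the discrete part. Writing $\varphi = \sum_{\pi\in\Pi_d(\Gamma)} F_{\pi}(\varphi) + \varphi_e$, where $\varphi_e$ denotes the continuous-spectrum contribution, the partial sums $\sum_{\pi} F_{\pi}(\varphi)$ converge to the discrete projection in $S(\Gamma\backslash G)$; since $\ell$ is continuous and linear, $\ell$ of the limit equals the limit, hence the sum, of the $\ell(F_{\pi}(\varphi)) = W_{F_{\pi}(\varphi),\,\psi}(1)$. This produces the first sum on the right-hand side of \eqref{eq: Whittaker spectral decomposition}.

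Next I would treat the continuous part. The key point is that, for each cusp $\fra$, the integral $\frac{1}{4\pi i}\frac{1}{m_{\fra}|\Lambda_{\fra}|}\int_{\Re\mu=0} F_{\pi_{\fra}(\mu)}(\varphi)\, d\mu$ is a convergent $S(\Gamma\backslash G)$-valued integral, again as a constituent of the convergence of \eqref{eq: spectral decomposition of varphi}, and $S(\Gamma\backslash G)$ is complete. A continuous linear functional commutes with such a vector-valued integral, so $\ell$ may be carried inside, giving $\int_{\Re\mu=0} \ell(F_{\pi_{\fra}(\mu)}(\varphi))\, d\mu = \int_{\Re\mu=0} W_{F_{\pi_{\fra}(\mu)}(\varphi),\,\psi}(1)\, d\mu$. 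Adding the discrete and continuous contributions yields precisely \eqref{eq: Whittaker spectral decomposition}.

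The hard part, such as it is, is only the functional-analytic justification of interchanging $\ell$ with the continuous-spectrum integral, and this reduces to the completeness of $S(\Gamma\backslash G)$ together with the already-established continuity of $\ell$ and the convergence of the expansion in $S(\Gamma\backslash G)$. In other words, all the genuine analysis—the Dixmier--Malliavin input behind convergence of \eqref{eq: spectral decomposition of varphi} in $S(\Gamma\backslash G)$, and the Sobolev bound behind continuity of the Whittaker functional—has been carried out before the statement, so the theorem is a formal consequence of evaluating a fixed continuous linear functional on a convergent spectral expansion. This mirrors the argument of \cite{CPS} over $\BR$.
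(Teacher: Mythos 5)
Your proposal is correct and matches the paper's own (implicit) argument exactly: the paper establishes the Dixmier--Malliavin convergence of \eqref{eq: spectral decomposition of varphi} in $S(\Gamma\backslash G)$ and the Sobolev continuity of $\varphi \mapsto W_{\varphi,\,\psi}(1)$ immediately before the theorem, and the theorem is then precisely the formal consequence of evaluating this continuous linear functional on the convergent expansion, as you do. Nothing further is needed.
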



\section{Kloosterman Sums, Poincar\'e Series and the Kloosterman-Spectral Formula}\label{sec: the Kloosterman-spectral formula}

In the following, we fix two nontrivial characters $\psi_1, \psi_2 \in \mathfrak X_{\infty} (\BC)  $. Let $\kappa \in \BCx$ be such that $\psi_2 (z) = \psi_1 (\kappa z)$.

\subsection{Kloosterman Sums} \label{sec: Kloosterman sums}

We first introduce
\begin{equation*}
\Omega (\Gamma) = \left\{ c \in \BCx : N \varw \underline {c} N \cap \Gamma \neq \O, \, \underline c \in A  \right\},
\end{equation*}
and for $c \in \Omega (\Gamma) $ define $\Gamma_c = N \varw \underline {c} N \cap \Gamma $. $\Gamma_c$ is both right and left invariant under $\Gamma_{\infty}$. For each $\gamma\in \Gamma_c$, we decompose $\gamma$ according to the Bruhat decomposition, namely
\begin{equation*}
\gamma = n_1 (\gamma) \varw \underline {c} n_2 (\gamma),
\end{equation*}
with $n_1 (\gamma), n_2 (\gamma) \in N$.

	For $c \in \Omega (\Gamma)$ and $\psi_1, \psi_2 \in \mathfrak X_{\infty} (\BCx) $, we define the associated Kloosterman sum as 
	\begin{equation*}
	Kl_{\Gamma} (c; \psi_1, \psi_2) = \sum_{ \Gamma_{\infty} \backslash \Gamma_c / \Gamma_{\infty} } \psi_1 (n_1 (\gamma)) \psi_2 (n_2 (\gamma)).
	\end{equation*}


\delete{ 
For $f \in S (N\backslash G; \psi_1) $, if there is some constant $\alpha > 0$ such that 
\begin{equation*}
| f (\underline z \hskip 1 pt \underline r k) | \lll_{\alpha }   r^{1+\alpha} 
\end{equation*}
for all $z \in \BC$, $r \in \BR_+$ and $k \in K$, then the integral
\begin{equation*}
K (c; \psi_1, \psi_2) (f) (g) = \int_N f \lp \varw \underline c n g \rp \psi_2\- (n) d n
\end{equation*}
converges absolutely and defines a function in $  S (N\backslash G; \psi_2)$. This is the intertwining operator of Jacquet.
}

\subsection{Poincar\'e Series}

Given any $\alpha, \beta > 0$, we let $S_{\alpha,\, \beta} (N \backslash G; \psi_1) $ denote the function space consisting of functions $f \in S (N \backslash G; \psi_1)$ satisfying
\begin{equation*}
| f (\underline z \hskip 1 pt \underline r k) | \lll_{\alpha,\, \beta} \min \left\{ r^{1+\alpha}, r^{1-\beta} \right\}
\end{equation*}
for all $\underline z \in N$, $\underline r \in A_+$ and $k \in K$. 
For $f \in S_{\alpha,\, \beta} (N \backslash G; \psi_1)$, we form the Poincar\'e series
\begin{equation*}
P_f (g) = \sum_{ \Gamma_{\infty} \backslash \Gamma} f (\gamma g).
\end{equation*}
Using  the (spherical) Eisenstein series in \cite[\S 3.2]{EGM} as   majorant, it is readily verified that the series for $P_f (g)$ is absolutely convergent, uniformly on compact subsets. Moreover, applying similar arguments as in the proof of \cite[Proposition 3.2.3]{EGM}, we may estimate $P_f (g)$ near the cusps of $\Gamma$ and prove that $P_f (g) \in S (\Gamma \backslash G)$.\footnote{In \cite[\S 2.3]{CPS}, the notion of rapid decreasing modulo $N$ is introduced in terms of the norm $\|\, \|_N$ on $N \backslash G$. However, their arguments in the proofs of Proposition 2.1 and 2.2 are incorrect. Nevertheless, the arguments here work in the $\PGL_2(\BR)$ context and corrects the errors in \cite{CPS}.} 

Later, we shall apply the Whittaker-spectral formula \eqref{eq: Whittaker spectral decomposition} in Theorem \ref{thm: Whittaker} to a specific Poincar\'e series $P_f$ that will be constructed in a moment. With this in mind, we need the following results that follow from   standard unfolding computations. 

By decomposing $\Gamma$ according to the Bruhat decomposition,
\begin{equation*}
\Gamma = \Gamma_{\infty}' \cup \bigcup_{c \in \Omega(\Gamma)} \Gamma_c, 
\end{equation*}
we find  on the left hand side of \eqref{eq: Whittaker spectral decomposition} a weighted sum of Kloosterman sums.
\begin{lem} \label{lem: geometric side}
	Let $ f \in S_{\alpha,\, \beta} (N \backslash G; \psi_1) $. Then
	\begin{equation*}
	W_{P_f,\, \psi_2} (1) = \delta_{\psi_1,\, \psi_2} |\Lambda_{\infty}| f (1)   + \sum_{c \in \Omega (\Gamma) }
	Kl_{\Gamma} (c; \psi_1, \psi_2) K(f, c; \psi_1, \psi_2),
	\end{equation*}
	where $\delta_{\psi_1,\, \psi_2}  $ is the Kronecker symbol for $\psi_1 $ and $\psi_2$ lying in the same orbit under the action of $\eta_{m_\infty}$, $ |\Lambda_{\infty}| $ is the area of $\Lambda_{\infty} \backslash \BC$, and 
	\begin{equation*}
	K (f, c; \psi_1, \psi_2)    = \int_N f \lp \varw \underline c n \rp \psi_2\- (n) d n
	\end{equation*}
	is called the Kloosterman-Jacquet distribution.
\end{lem}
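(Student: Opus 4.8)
The plan is to compute $W_{P_f,\, \psi_2}(1)$ by the standard unfolding of the Poincar\'e series against the Bruhat stratification of $\Gamma$. Starting from the definitions I would write
\[
W_{P_f,\, \psi_2}(1)=\int_{\Gamma_{\infty}\backslash N}P_f(n)\,\psi_2\-(n)\,dn=\int_{\Gamma_{\infty}\backslash N}\sum_{\gamma\in\Gamma_{\infty}\backslash\Gamma}f(\gamma n)\,\psi_2\-(n)\,dn ,
\]
and interchange the integration over the compact quotient $\Gamma_{\infty}\backslash N$ with the summation, which is legitimate because the spherical Eisenstein majorant from \cite[\S 3.2]{EGM} makes the series for $P_f$ converge absolutely and uniformly on compacta. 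The Bruhat decomposition $\Gamma=\Gamma_{\infty}'\cup\bigcup_{c\in\Omega(\Gamma)}\Gamma_c$ then splits the $\gamma$-sum into a Borel (identity) stratum and a big-cell stratum indexed by $c$, and I would treat the two separately.

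For the Borel stratum I would use $\Gamma_{\infty}\backslash\Gamma_{\infty}'\cong\eta_{m_{\infty}}$, taking representatives that project to $\zeta\in\eta_{m_{\infty}}$ in $M_{\infty}\cong A$. The identity $\underline\zeta\,\underline u=\underline{\zeta u}\,\underline\zeta$ together with the left $\psi_1$-equivariance of $f$ reduces the inner integral to $\int_{\Lambda_{\infty}\backslash\BC}\psi_1(\zeta u)\,\psi_2\-(u)\,du$ times a value of $f$ at the (compact) torsion representative. Since $\psi_j=\psi_{\omega_j}$ with $\omega_j\in\Lambda_{\infty}'$ and since $\zeta\Lambda_{\infty}'=\Lambda_{\infty}'$ (the $\eta_{m_{\infty}}$-action preserves both $\Lambda_{\infty}$ and its dual), orthogonality of additive characters on $\Lambda_{\infty}\backslash\BC$ shows this integral equals $|\Lambda_{\infty}|$ exactly when $\omega_1\zeta=\omega_2$ and vanishes otherwise. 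Thus a nonzero contribution occurs precisely when $\psi_1$ and $\psi_2$ lie in a common $\eta_{m_{\infty}}$-orbit, where a single root of unity $\zeta=\omega_2/\omega_1$ survives; evaluating $f$ at the corresponding representative (which is the identity coset when $\psi_1=\psi_2$) yields the diagonal term $\delta_{\psi_1,\, \psi_2}|\Lambda_{\infty}|f(1)$.

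For each $c\in\Omega(\Gamma)$ I would organize the $\Gamma_c$-stratum through the double coset space $\Gamma_{\infty}\backslash\Gamma_c/\Gamma_{\infty}$: the residual right $\Gamma_{\infty}$-action unfolds the integral over $\Gamma_{\infty}\backslash N$ to an integral over all of $N$. Writing $\gamma=n_1(\gamma)\varw\underline c\,n_2(\gamma)$ in Bruhat coordinates and substituting $n\mapsto n_2(\gamma)\- n$, the left $\psi_1$-equivariance extracts the factor $\psi_1(n_1(\gamma))\psi_2(n_2(\gamma))$, so summing over the double cosets assembles exactly $Kl_{\Gamma}(c;\psi_1,\psi_2)$, while the leftover integral $\int_N f(\varw\underline c\,n)\,\psi_2\-(n)\,dn$ is the Kloosterman–Jacquet distribution $K(f,c;\psi_1,\psi_2)$. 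Convergence of this last integral follows from the decay hypothesis on $S_{\alpha,\, \beta}(N\backslash G;\psi_1)$: the Iwasawa height of $\varw\underline c\,\underline u$ equals $1/(|c|(1+|u|^2))$, which tends to $0$ like $|c|\-|u|^{-2}$ as $|u|\to\infty$, so the bound $r^{1+\alpha}$ renders the integrand integrable on $N$. Summing over $c$ gives the second term and completes the identity.

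The main obstacle is the rigorous bookkeeping of the unfolding rather than any single hard estimate. One must justify the interchange of summation and integration, verify that the stabilizers arising in the double coset decomposition of $\Gamma_c$ are trivial so that the unfolding to $\int_N$ is exact, and keep careful track of the $\eta_{m_{\infty}}$-action of roots of unity on $\Lambda_{\infty}$ and $\Lambda_{\infty}'$ so that the diagonal term is identified correctly. Confirming absolute convergence of $K(f,c;\psi_1,\psi_2)$ from the two-sided decay of $f$, and that the resulting sum over $c\in\Omega(\Gamma)$ converges, is the remaining technical point.
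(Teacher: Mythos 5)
Your proof is correct and is exactly the ``standard unfolding computation'' that the paper invokes without writing out: you split the sum over $\Gamma_{\infty}\backslash\Gamma$ along the Bruhat decomposition $\Gamma=\Gamma_{\infty}'\cup\bigcup_{c\in\Omega(\Gamma)}\Gamma_c$, obtain the diagonal term from $\Gamma_{\infty}\backslash\Gamma_{\infty}'\cong\eta_{m_{\infty}}$ by orthogonality of characters on $\Lambda_{\infty}\backslash\BC$, and unfold each double coset in $\Gamma_{\infty}\backslash\Gamma_c/\Gamma_{\infty}$ (whose stabilizers are indeed trivial, since $\gamma\-\Gamma_{\infty}\gamma\cap\Gamma_{\infty}=\{1\}$ for $\gamma$ in the big cell) to produce $Kl_{\Gamma}(c;\psi_1,\psi_2)\,K(f,c;\psi_1,\psi_2)$. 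Your convergence justifications are also sound: the Eisenstein-series majorant legitimizes the interchange of sum and integral, and the Iwasawa height $r=1/(|c|(1+|u|^2))$ together with the bound $r^{1+\alpha}$ gives absolute convergence of the Kloosterman--Jacquet integral.
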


For the right hand side of \eqref{eq: Whittaker spectral decomposition},  we shall need two identities on the inner products with Poincar\'e series. First,
\begin{equation}\label{eq: inner product identity}
(P_f, \varphi) = \int_{ N\backslash G} f (g) \overline {W_{\varphi,\, \psi_1} (g) } d g, \hskip 10 pt \varphi \in S (\Gamma\backslash G).
\end{equation}
Second, for $f \in S_{\alpha,\, \beta} (N \backslash G; \psi_1)$, as alluded to above, with the arguments in the proof of \cite[Proposition 3.2.3]{EGM}, one may show that the Poincar\'e series $P_f (g)$ has sufficient decay to take its inner product with the Eisenstein series $E_{\fra} (g ; f', \mu)$, and 
\begin{equation}\label{eq: inner product identity, Eisenstein}
(P_f, E_{\fra} (\cdot ; f', \mu) ) = \int_{ N\backslash G} f (g) \overline {W_{E_{\fra} (\cdot ; f',\, \mu),\, \psi_1} (g) } d g, \hskip 10 pt f' \in V_{\fra} (\mu).
\end{equation}

\subsection{The Specific Choice of Poincar\'e Series}\label{sec: choice of f}

Let $\eta   \in \mathscr S (\BC)$ be a Schwartz function on $\BC$ and let $\nu \in C^{\infty}_c (\BCx)$ be a smooth function of compact support on $\BCx$. To $\eta$ and $\nu$ we shall associate a function $f_{\eta, \nu} \in S (N \backslash G; \psi_1)$ by defining
\begin{equation*}
f_{\eta,\nu} (g) = 
\begin{cases}
\psi_1 (u_1) \eta (u_2) \nu (a), & \text{ if } g = \underline {u}_1 \varw \underline u_2 \hskip 1 pt \underline a \in N \varw N A, \\
0, & \text{ if } g \in B = N A.
\end{cases}
\end{equation*}
We have $f_{\eta,\nu} (n g) = \psi_1 (n) f_{\eta,\nu} (g) $ for $n \in N$ and it may be shown that $f_{\eta,\nu} \in S_{\alpha, 1} (N\backslash G; \psi_1) $ for all $\alpha > 0$. Note that $f_{\eta, \nu} (1) = 0$.

Subsequently, for the specific choice $\varphi = P_{f_{\eta,\nu}}$, the left and the right hand side of the identity \eqref{eq: Whittaker spectral decomposition} will be referred to as the geometric and the spectral side, respectively.

\subsection{The Geometric Side}

The Kloosterman-Jacquet distribution $K(f, c; \psi_1, \psi_2)$ in Lemma \ref{lem: geometric side} for $ f = f_{\eta, \nu} $ can be computed very easily.

Since
\begin{equation*}
f_{\eta, \nu} \lp \varw \begin{pmatrix}
c & \\
 & 1
\end{pmatrix} 
\begin{pmatrix}
1 & u \\
  & 1
\end{pmatrix}\rp = f_{\eta, \nu} \lp \varw 
\begin{pmatrix}
1 & c u \\
& 1
\end{pmatrix}
\begin{pmatrix}
c & \\
& 1
\end{pmatrix} \rp
= \eta (c u) \nu (c),
\end{equation*}
by definition, 
we have
\begin{equation*}
K (f_{\eta, \nu}, c; \psi_1, \psi_2) = \int_{\BC} \eta (c u) \nu (c) \psi_2\- (u) d u = \frac {1} {|c|^2 }  \nu (c) \int_{\BC} \eta ( u)  \psi_2\- \lp \frac {u} c \rp d u.
\end{equation*}
\begin{lem}
We have
\begin{equation*}
K (f_{\eta, \nu}, c; \psi_1, \psi_2) = \frac 1 {|c|^2} \nu (c) \widehat \eta \lp \frac 1 c \rp,
\end{equation*}
where $\widehat \eta $ is the Fourier transform with respect to $\psi_2$,
$$\widehat \eta (z) = \int_{\BC} \eta (u) \psi_2\- (u z) d u. $$
\end{lem}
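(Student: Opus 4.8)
The plan is to evaluate the defining integral
\[
K(f_{\eta,\nu}, c; \psi_1, \psi_2) = \int_N f_{\eta,\nu}\lp \varw\, \underline c\, n \rp \psi_2\-(n)\, dn
\]
directly, parametrizing $N$ by $u \in \BC$ via $n = \underline u$. The only genuinely structural step is to put $\varw\, \underline c\, \underline u$ into its Bruhat form relative to $N\varw N A$. This rests on the elementary commutation $\underline c\, \underline u = \underline{cu}\, \underline c$ in $B$, i.e.
\[
\begin{pmatrix} c & \\ & 1 \end{pmatrix}\begin{pmatrix} 1 & u \\ & 1 \end{pmatrix} = \begin{pmatrix} 1 & cu \\ & 1 \end{pmatrix}\begin{pmatrix} c & \\ & 1 \end{pmatrix},
\]
which gives $\varw\, \underline c\, \underline u = \varw\, \underline{cu}\, \underline c$. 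This exhibits $\varw\, \underline c\, \underline u$ as an element of $N\varw N A$ whose leading unipotent factor is trivial, whose middle $N$-coordinate is $cu$, and whose $A$-coordinate is $c$.

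With this decomposition in hand, the defining property of $f_{\eta,\nu}$ evaluates it as $f_{\eta,\nu}\lp \varw\, \underline c\, \underline u \rp = \psi_1(\underline 0)\, \eta(cu)\, \nu(c) = \eta(cu)\, \nu(c)$, exactly the expression displayed immediately before the statement. Substituting this into the integral and extracting the constant factor $\nu(c)$ reduces the claim to the evaluation of $\int_{\BC} \eta(cu)\, \psi_2\-(u)\, du$.

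The final step is the change of variables $u \mapsto u/c$ on $\BC$. Because scaling by $c\-$ has Jacobian $|c|^{-2}$, this produces $|c|^{-2} \int_{\BC} \eta(u)\, \psi_2\-(u/c)\, du$. Writing $\psi_2\-(u/c) = \psi_2\-(u \cdot c\-)$ and comparing with the definition $\widehat\eta(z) = \int_{\BC} \eta(u)\, \psi_2\-(uz)\, du$ specialized at $z = c\-$ identifies the surviving integral as $\widehat\eta(c\-)$, which yields the asserted formula.

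I do not anticipate any real obstacle: the entire content is the Bruhat commutation relation together with a single linear change of variables. The only point demanding a moment's care is the measure normalization—one must confirm that the factor of $2$ built into $dz$ is carried consistently through the scaling $u \mapsto u/c$, so that the Jacobian is simply $|c|^{-2}$ and no spurious constant survives.
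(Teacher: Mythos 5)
Your proposal is correct and follows essentially the same route as the paper: the commutation $\underline c\, \underline u = \underline{cu}\, \underline c$ to place $\varw\, \underline c\, \underline u$ in the form $\underline u_1 \varw\, \underline u_2\, \underline a$ with $\underline u_1 = 1$, $\underline u_2 = \underline{cu}$, $\underline a = \underline c$, followed by the substitution $u \mapsto u/c$ with Jacobian $|c|^{-2}$ to identify the integral as $\widehat\eta(1/c)$. Your closing remark on the measure normalization is also correct — the scaling Jacobian is $|c|^{-2}$ independently of the factor of $2$ in $dz$, so no spurious constant appears.
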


\subsection{The Spectral Side}

In the following, we shall compute the spectral side  and show that it can be expressed in terms of Bessel transforms of the weight function.
\subsubsection{Bessel functions}
Let $\psi$ be a nontrivial additive character of $\BC$. For an infinite dimensional  unitary representation $\pi$ of $G$, let $\mathcal W (\pi, \psi )$ be the Whittaker model of $\pi$, that is, the image of a nonzero embedding of $V_{\pi}^{\infty}$ into the space $\mathrm{Ind}_N^G (\psi)$ of $C^{\infty}$ functions $W : G \ra \BC$ satisfying $W (n g) = \psi (n) W(g)$ for all $n \in N$, with $G$ acting on $\mathrm{Ind}_N^G (\psi)$ by right translation. 
Actually, it is known from Shalika's multiplicity one theorem \cite{Shalika-MO} that such a nonzero embedding exists and is unique up to constant. Let $\mathcal K (\pi, \psi )$ be the associated Kirillov model of $\pi$   on $ L^2 (\BCx, d^{\times} z) $, whose smooth vectors are all of the form $  W  \begin{pmatrix}
a & \\ & 1
\end{pmatrix}  $ for $W  \in \mathcal W (\pi, \psi )$. 

It is shown in \cite[(18.4)]{Qi-Bessel} that there exists a (complex-valued) real analytic function $\EJ_{\pi, \, \psi }$ on $\BCx$ which acts as an integral kernel of the action of the Weyl element $\varw$ on the Kirillov model $\mathcal K (\pi, \psi)$. Namely, for $W \in \mathcal W (\pi, \psi)$,
\begin{equation}\label{eq: Bessel function, Weyl element}
W \lp  
\begin{pmatrix}
a & \\
&  1
\end{pmatrix} 
\varw   \rp 
=  \int_{\BC^\times } \EJ_{\pi, \, \psi } ( a b) 
W 
\begin{pmatrix}
b & \\
& 1
\end{pmatrix} 
d^\times b.
\end{equation}
A simple fact is that the Bessel function is conjugation invariant, namely, $\overline { \EJ_{\pi,\, \psi } (z) } = \EJ_{\pi, \, \psi\- } (z) = \EJ_{\pi , \, \psi } (z)$ (see \cite[(18.2)]{Qi-Bessel}).

\subsubsection{The constants $c_{\psi } (\pi, \Gamma) $ and $c_{\psi } (\pi_{\fra} (\mu), \Gamma) $} 

Let $\psi$ be a nontrivial character of $\BC$. 

First, let $(\pi, V_{\pi})$ be a discrete component of $L^2 (\Gamma \backslash G)$. The inner product on $V_{\pi}$ is the Petersson inner product inherited from $L^2 (\Gamma \backslash G)$. For  smooth vector $\varphi \in V_{\pi}^\infty \subset S (\Gamma\backslash G)$, the map
\begin{equation*}
\varphi \mapsto W_{\varphi} (g) = \int_{\Gamma_{\infty}\backslash N} \varphi (n g) \psi \- (n) d n
\end{equation*}
defines a Whittaker model $\mathcal W (\pi, \psi )$ of $(\pi, V_{\pi}^{\infty})$. Let $\mathcal K (\pi, \psi )$ be the associated Kirillov model. 
It comes equipped with its canonical $L^2$ inner product on $\BCx$. Hence there is a positive constant, which we shall denote $c  (\pi, \Gamma) = c_{\psi } (\pi, \Gamma)$, such that for all $\varphi, \varphi' \in V_{\pi}$
\begin{equation}\label{eq: first constant}
\int_{\BCx} W_{\varphi} \begin{pmatrix}
a & \\ & 1
\end{pmatrix} \overline {W_{\varphi'} } \begin{pmatrix}
a & \\ & 1
\end{pmatrix} d^\times a = c  (\pi, \Gamma) \int_{ \Gamma \backslash G} \varphi (g) \overline {\varphi' (g)} d g.
\end{equation}

Second, we consider the representation $(\pi_{\fra} (\mu), V_{\fra} (\mu) )$. For $f \in V_{\fra} (\mu)$, the function
\begin{align*}
W_{E_{\fra} (\cdot; f ,\, \mu)  } (g) = \int_{\Gamma_{\infty}\backslash N} E_{\fra} (ng; f, \mu) \psi \- (n) d n
\end{align*}
defines a Whittaker model $\mathcal W (\pi_{\fra} (\mu) , \psi )$
of $\pi_{\fra} (\mu)$. 
Recall that there is also a canonical inner product on $(\pi_{\fra} (\mu), V_{\fra} (\mu) )$ given by
\begin{equation*}
\langle f, f' \rangle_{\fra} = \int_{ P_{\fra} \backslash G} f(g) \overline {f' (g)} d g.
\end{equation*}
Hence  there exists a  constant $c  (\pi_{\fra} (\mu), \Gamma) = c_{\psi } (\pi_{\fra} (\mu), \Gamma) $
such that for all $f, f' \in V_{\fra} (\mu)^{\infty}$ we have
\begin{equation}\label{eq: first constant, Eisenstein}
\int_{\BCx} W_{E_{\fra} (\cdot; f ,\, \mu)  } \begin{pmatrix}
a & \\ & 1
\end{pmatrix} \overline {W_{E_{\fra} (\cdot; f' ,\, \mu)  } } \begin{pmatrix}
	a & \\ & 1
	\end{pmatrix}  d^\times a
= c (\pi_{\fra} (\mu), \Gamma) \int_{ P_{\fra} \backslash G} f (g) \overline {f' (g)} d g.
\end{equation}

\subsubsection{The constants $ c (\pi ; \psi_2/\psi_1) $ and $ c (\pi_{\fra} (\mu); \psi_2/\psi_1) $}

In what follows we shall have to compare Whittaker models associated with $\psi_1$ and $\psi_2$. 

Let $\pi \in \Pi_d (\Gamma)$. Because $\psi_2 (z) = \psi_1 (\kappa z)$, if $W_{\varphi,\, \psi_1} \in \mathcal W (\pi, \psi_1) $, then the function
\begin{equation*}
W'_{\varphi,\, \psi_2} (g) = W_{\varphi,\, \psi_1} \lp \begin{pmatrix}
\kappa & \\
 & 1
\end{pmatrix} g \rp
\end{equation*}
satisfies $ W'_{\varphi,\, \psi_2} (n g) = \psi_2 (n) W'_{\varphi,\, \psi_2} (  g) $. Since this left multiplication by $ \begin{pmatrix}
\kappa & \\
& 1
\end{pmatrix} $ commutes with the action of $G$, the collection of function $ W'_{\varphi,\, \psi_2} (  g) $ form another $\psi_2$-Whittaker model. By Shalika's multiplicity one theorem, there is a constant $c (\pi; \psi_2/\psi_1) $ such that for all $ \varphi \in V_{\pi}^{\infty}$ we have
\begin{equation}\label{eq: second constant}
W_{\varphi,\, \psi_2} (  g) = c (\pi; \psi_2/\psi_1) W_{\varphi,\, \psi_1} \lp \begin{pmatrix}
\kappa & \\
& 1
\end{pmatrix} g \rp.
\end{equation}

Similarly, there exists a constant $ c (\pi_{\fra} (\mu); \psi_2/\psi_1) $ such that for all $f \in V_{\fra} (\mu)^{\infty}$ we have
\begin{equation}\label{eq: second constant, Eisenstein}
W_{E_{\fra} (\cdot; f ,\, \mu),\, \psi_2} (  g) = c (\pi_{\fra} (\mu); \psi_2/\psi_1) W_{E_{\fra} (\cdot; f ,\, \mu),\, \psi_1} \lp \begin{pmatrix}
\kappa & \\
& 1
\end{pmatrix} g \rp.
\end{equation}

\delete{

For $\pi$ occurring in either the discrete or continuous spectrum of $L^2 (\Gamma \backslash G)$ we let
\begin{equation*}
c_{\Gamma} (\pi; \psi_1, \psi_2) = \frac {c(\pi; \psi_2/\psi_1) c_{\psi_1}( \pi, \Gamma ) } {4 \pi^2 }.
\end{equation*}
Furthermore, for $\eta \in \mathscr S (\BC)$ and $\nu \in C^\infty_c (\BCx)$ we set 
$$A_{\eta, \nu} (z) = \nu \lp \frac 1 z \rp \widehat \eta (z), \hskip 10 pt z \in \BCx,$$
where $ \widehat \eta $ is the Fourier transform of $\eta$ with respect to $\psi_2$.}

\subsubsection{Computing the spectral side}

\begin{lem}
	Let $f = f_{\eta, \nu}$. Suppose that $\psi_2 (z) = \psi_1 (\kappa z)$. 
	
	For $(\pi, V_{\pi})$ occurring discretely in $L^2 (\Gamma \backslash G)$, we have
	\begin{equation}\label{eq: spectral side, discrete}
	W_{F_{\pi} (P_f),\, \psi_2 } (1) = c_{\Gamma} (\pi; \psi_1, \psi_2) 
	\int_{\BCx} \nu (z)  \widehat \eta \lp \frac 1 z \rp \EJ_{\pi, \, \psi_1} \lp \frac {\kappa} z \rp d^\times z,
	\end{equation}
	with
	\begin{equation*}
	c_{\Gamma} (\pi; \psi_1, \psi_2) = \frac {c(\pi; \psi_2/\psi_1) c_{\psi_1}( \pi, \Gamma ) } {4 \pi^2 }.
	\end{equation*}
	
	For $(\pi_{\fra} (\mu), V_{\fra} (\mu) )$ occurring in the continuous spectrum of  $L^2 (\Gamma \backslash G)$, we have
	\begin{equation}\label{eq: spectral side, Eisenstein}
	W_{F_{\pi_{\fra } (\mu) } (P_f),\, \psi_2 } (1) = c_{\Gamma} (\pi_{\fra} (\mu) ; \psi_1, \psi_2) 
	\int_{\BCx} \nu (z)  \widehat \eta \lp \frac 1 z \rp \EJ_{\pi_{\fra} (\mu), \, \psi_1} \lp \frac {\kappa} z \rp d^\times z,
	\end{equation}
	with
	\begin{equation*}
	c_{\Gamma} (\pi_{\fra} (\mu) ; \psi_1, \psi_2) = \frac {c(\pi_{\fra} (\mu) ; \psi_2/\psi_1) c_{\psi_1}( \pi_{\fra} (\mu), \Gamma ) } {4 \pi^2 }.
	\end{equation*}
\end{lem}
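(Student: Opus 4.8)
The plan is to pin down $W_{F_\pi(P_f),\,\psi_2}(1)$ in two stages: first compute the entire $\psi_1$-Kirillov function $b\mapsto W_{\Phi,\,\psi_1}(\underline b)$ of the projection $\Phi:=F_\pi(P_f)\in V_\pi$, and then recover the desired value by setting $b=\kappa$ and passing from $\psi_1$ to $\psi_2$ through the comparison \eqref{eq: second constant}, which at $g=1$ reads $W_{\Phi,\,\psi_2}(1)=c(\pi;\psi_2/\psi_1)\,W_{\Phi,\,\psi_1}(\underline\kappa)$. Working on the torus is what makes the computation tractable, because the characterization \eqref{eq: def of F (phi)} of $\Phi$, namely $(\Phi,\varphi')=(P_f,\varphi')$ for all $\varphi'\in V_\pi$, becomes an identity between two explicit linear functionals of the Kirillov function of $\varphi'$, which I can then invert.

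I would rewrite each side of $(\Phi,\varphi')=(P_f,\varphi')$ inside the Kirillov model $\mathcal K(\pi,\psi_1)$. On the left, \eqref{eq: first constant} converts the Petersson product into the $L^2(\BCx)$ product, so $(\Phi,\varphi')=c_{\psi_1}(\pi,\Gamma)^{-1}\int_{\BCx}W_{\Phi,\,\psi_1}(\underline b)\,\overline{W_{\varphi',\,\psi_1}(\underline b)}\,d^\times b$. On the right, the unfolding \eqref{eq: inner product identity} gives $(P_f,\varphi')=\int_{N\backslash G}f(g)\,\overline{W_{\varphi',\,\psi_1}(g)}\,dg$; inserting $f=f_{\eta,\nu}$ in Bruhat coordinates and using $dg=(1/4\pi^2)\,du_2\,d^\times a$ on $N\backslash G$ turns this into $(1/4\pi^2)\int_{\BCx}\!\int_{\BC}\eta(u_2)\nu(a)\,\overline{W_{\varphi',\,\psi_1}(\varw\underline{u}_2\underline a)}\,du_2\,d^\times a$.

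The heart of the matter is to return the argument $\varw\underline{u}_2\underline a$ of $W_{\varphi'}$ to the torus. Using the $\PGL_2$ Weyl relation $\varw\underline a=\underline{a^{-1}}\varw$ I rewrite $\varw\underline{u}_2\underline a=\underline{a^{-1}}\varw\underline{u_2/a}$, and then the kernel formula \eqref{eq: Bessel function, Weyl element}, applied to $\pi(\underline{u_2/a})W_{\varphi'}$ (which merely inserts a factor $\psi_1(bu_2/a)$), yields $W_{\varphi',\,\psi_1}(\varw\underline{u}_2\underline a)=\int_{\BCx}\EJ_{\pi,\,\psi_1}(b/a)\,\psi_1(bu_2/a)\,W_{\varphi',\,\psi_1}(\underline b)\,d^\times b$. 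Substituting this, using that $\EJ_{\pi,\,\psi_1}$ is real-valued, carrying out the $u_2$-integral, and rewriting $\psi_1^{-1}(w)=\psi_2^{-1}(w/\kappa)$ collapses the inner integral to the $\psi_2$-Fourier transform $\widehat\eta(b/(a\kappa))$. After interchanging the order of integration, the right-hand side becomes $(1/4\pi^2)\int_{\BCx}\big(\int_{\BCx}\nu(a)\,\EJ_{\pi,\,\psi_1}(b/a)\,\widehat\eta(b/(a\kappa))\,d^\times a\big)\overline{W_{\varphi',\,\psi_1}(\underline b)}\,d^\times b$. Matching this against the left-hand expression for every $\varphi'\in V_\pi^\infty$ and using that the smooth Kirillov functions $\{W_{\varphi',\,\psi_1}|_A\}$ are dense in $L^2(\BCx)$ lets me read off $W_{\Phi,\,\psi_1}(\underline b)=\frac{c_{\psi_1}(\pi,\Gamma)}{4\pi^2}\int_{\BCx}\nu(a)\,\EJ_{\pi,\,\psi_1}(b/a)\,\widehat\eta(b/(a\kappa))\,d^\times a$. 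Specializing to $b=\kappa$ and multiplying by $c(\pi;\psi_2/\psi_1)$ via \eqref{eq: second constant} gives precisely \eqref{eq: spectral side, discrete} with the stated constant $c_\Gamma(\pi;\psi_1,\psi_2)$; the continuous-spectrum identity \eqref{eq: spectral side, Eisenstein} follows verbatim, with \eqref{eq: first constant}, \eqref{eq: inner product identity} and \eqref{eq: second constant} replaced by \eqref{eq: first constant, Eisenstein}, \eqref{eq: inner product identity, Eisenstein} and \eqref{eq: second constant, Eisenstein}, and with the Bessel kernel of $\pi_{\fra}(\mu)$.

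The main obstacle is the analytic justification rather than the algebra. I expect the delicate points to be: (i) the use of Fubini when the Bessel integral representation is inserted into the unfolded inner product, which requires absolute convergence and hence control of the size of $\EJ_{\pi,\,\psi_1}$ against the rapid decay of $\widehat\eta$ (coming from $\eta\in\mathscr S(\BC)$) and the compact support of $\nu$; and (ii) the \emph{read-off} step, i.e.\ arguing that two continuous linear functionals agreeing on the dense subspace of Kirillov functions must have equal kernels almost everywhere, hence everywhere by smoothness. In the Eisenstein case one must additionally check that, although $E_{\fra}(\cdot;f',\mu)$ is not square-integrable, the pairing $(P_f,E_{\fra}(\cdot;f',\mu))$ and the constant $c(\pi_{\fra}(\mu),\Gamma)$ of \eqref{eq: first constant, Eisenstein} are well defined, which follows from the decay of $P_f$ near the cusps underlying \eqref{eq: inner product identity, Eisenstein}.
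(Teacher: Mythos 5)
Your proposal is correct and follows essentially the same route as the paper's proof: characterize $F_{\pi}(P_f)$ through \eqref{eq: def of F (phi)}, \eqref{eq: inner product identity} and \eqref{eq: first constant}, unfold over the open Bruhat cell with the measure $(1/4\pi^2)\,du_2\,d^\times a$, apply the kernel formula \eqref{eq: Bessel function, Weyl element} to reduce $W_{\varphi'}(\varw\,\underline{u}_2\,\underline{a})$ to a Bessel integral over the torus, carry out the $u_2$-integral to produce the $\psi_2$-Fourier transform $\widehat\eta$, read off the Kirillov function, and finally specialize at $\kappa$ using \eqref{eq: second constant}. Your use of the Weyl relation $\varw\,\underline{a}=\underline{a^{-1}}\,\varw$ in place of the paper's formulation via $W_{\pi(g)\varphi}(\varw)$, and your explicit density argument for matching kernels in $L^2(\BCx)$ (which the paper leaves implicit), are only cosmetic variations.
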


\begin{proof}
	First, in view of \eqref{eq: second constant}, we have
	\begin{equation*}
	W_{F_{\pi} (P_f),\, \psi_2 } (1) = c (\pi; \psi_2/ \psi_1) W_{ F_{\pi} (P_f),\, \psi_1 } \begin{pmatrix}
	\kappa & \\
	 & 1
	\end{pmatrix}.
	\end{equation*}
	So it is enough to compute $W_{ F_{\pi} (P_f),\, \psi_1 } \begin{pmatrix}
	\kappa & \\
	& 1
	\end{pmatrix}$. 
	Let us now drop $\psi_1$ from notations. 
	By  \eqref{eq: def of F (phi)}, \eqref{eq: inner product identity} and \eqref{eq: first constant}, we see that $W_{ F_{\pi} (P_f)  } \begin{pmatrix}
	a & \\
	& 1
	\end{pmatrix}$ is characterized by
	\begin{equation*}
    \int_{\BCx} W_{F_{\pi} (P_f) } \begin{pmatrix}
    a & \\ & 1
    \end{pmatrix} \overline {W_{\varphi } } \begin{pmatrix}
    	a & \\ & 1
    	\end{pmatrix}  d^\times a = c  (\pi, \Gamma) \int_{ N \backslash G} f (g) \overline {W_{\varphi} (g)} d g \hskip 10 pt \text{ all } \varphi \in V_{\pi}^{\infty}.
	\end{equation*}
	Now substitute the definition of $f (g) = f_{\eta, \nu} (g)$ on the right and integrate over the open Bruhat cell. We get
	\begin{align*}
	\int_{ N \backslash G} f (g) \overline {W_{\varphi} (g)} d g = \frac 1 {4 \pi^2} \int_{\BC } \int_{ \BCx } \nu (z)  \eta (u) \overline {W_{\varphi} } \lp \varw \begin{pmatrix}
		1 & u \\
		 & 1
		\end{pmatrix} 
		\begin{pmatrix}
		z & \\
		 & 1
		\end{pmatrix}\rp   d^{\times} z \, d u.
	\end{align*}
	We now use the identity \eqref{eq: Bessel function, Weyl element} for the Bessel function.
	\begin{align*}
	W_{\varphi}  \lp \varw \begin{pmatrix}
	1 & u \\
	& 1
	\end{pmatrix} 
	\begin{pmatrix}
	z & \\
	& 1
	\end{pmatrix}\rp 
& = W_{\pi { \tiny \begin{pmatrix}
		z & u \\
		 & 1
		\end{pmatrix}} \varphi } (\varw) \\
& = \int_{\BCx} \EJ_{\pi} (a) W_{\pi { \tiny \begin{pmatrix}
		z & u \\
		& 1
		\end{pmatrix}} \varphi }
\begin{pmatrix}
a & \\
& 1
\end{pmatrix} d^\times a \\
& = \int_{\BCx} \psi_1 (a u) \EJ_{\pi} (a) W_{\varphi} \begin{pmatrix}
a z & \\
& 1
\end{pmatrix} d^\times a\\
& = \int_{\BCx} \psi_2 \lp  \frac 1 {\kappa} \frac {a u} {z} \rp \EJ_{\pi} \lp \frac {a} {z} \rp W_{\varphi} \begin{pmatrix}
a  & \\
& 1
\end{pmatrix} d^\times a.
	\end{align*}
Therefore,
\begin{align*}
\int_{ N \backslash G} f (g) &   \overline {W_{\varphi} (g)} d g  = \\
& \frac 1 {4 \pi^2} \int_{\BC }  \int_{\BCx} \int_{ \BCx } \nu (z) \eta (u)  \psi_2\- \lp   \frac {a u} {\kappa z} \rp   { \EJ_{\pi} \lp \frac {a} {z} \rp} \overline { W_{\varphi} } \begin{pmatrix}
a  & \\
& 1
\end{pmatrix}  d^\times a \,  d^{\times} z \, d u.
\end{align*}
Interchanging the order of integrations, this becomes
\begin{align*}
\int_{\BCx} \left\{  \frac 1 {4 \pi^2}  \int_{ \BCx } \nu (z)  \widehat \eta \lp \frac {a} {\kappa z} \rp    { \EJ_{\pi} \lp \frac {a} {z} \rp}  d^{\times} z \right\} \overline { W_{\varphi} } \begin{pmatrix}
	a  & \\
	& 1
	\end{pmatrix}   d^\times a.
\end{align*}
Note that $\widehat \eta$ is the Fourier transform of $\eta$ with respect to $\psi_2$. This yields the identity
\begin{equation*}
W_{ F_{\pi} (P_f)  } \begin{pmatrix}
a & \\
& 1
\end{pmatrix} = 
\frac {c  (\pi, \Gamma) } {4 \pi^2}  \int_{ \BCx } \nu (z)  \widehat \eta \lp \frac {a} {\kappa z} \rp   { \EJ_{\pi} \lp \frac {a} {z} \rp}  d^{\times} z,
\end{equation*}
and then follows the formula \eqref{eq: spectral side, discrete}. 

Proceeding  exactly as above, we may derive \eqref{eq: spectral side, Eisenstein}.
\end{proof}

\subsection{The Kloosterman-Spectral Formula}

In conclusion, as a consequence of Theorem \ref{thm: Whittaker} applied to the Poincar\'e
series $P_f$ with $f = f_{\eta,\nu}$ as in \S \ref{sec: choice of f}, we have the following Kloosterman-spectral formula.

\begin{thm}\label{thm: first formula}
	Let $\eta \in \mathscr S (\BC)$ and $\nu \in C^\infty_c (\BCx)$. Set $A_{\eta, \nu} (z) = \nu (1/z) \widehat \eta (z)$. Let  $\kappa \in \BCx$ be such that $\psi_2 (z) = \psi_1 (\kappa z)$. Then
	\begin{align*}
	\sum_{ c\in \Omega (\Gamma) } & \frac {Kl_{\Gamma} (c; \psi_1, \psi_2 ) } {|c|^2} A_{\eta, \nu} \lp \frac 1 c \rp \\
	 = &\ \sum_{ \pi \in \Pi_d (\Gamma) } c_{\Gamma} (\pi; \psi_1, \psi_2) \int_{ \BCx} A_{\eta, \nu} (z) \EJ_{\pi, \, \psi_1} (\kappa z) d^\times z \\
	&\ + \sum_{ \fra} \frac 1 {4 \pi i} \frac 1 {m_{\fra} |\Lambda_{\fra}|} \int_{\Re \mu = 0}  c_{\Gamma} (\pi_{\fra} (\mu) ; \psi_1, \psi_2) \left\{  \int_{ \BCx} A_{\eta, \nu} (z) \EJ_{\pi_{\fra} (\mu) ,\, \psi_1} (\kappa z) d^\times z \right\} d \mu, 
	\end{align*}
 with $\mu \in \mathfrak X_{m_{\fra}} (\BCx)$.
\end{thm}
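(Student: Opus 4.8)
The plan is to apply the Whittaker-spectral decomposition of Theorem \ref{thm: Whittaker}, with the character $\psi_2$, to the Poincar\'e series $\varphi = P_f$ for $f = f_{\eta, \nu}$ as constructed in \S\ref{sec: choice of f}, evaluated at $g = 1$, and then to substitute into the two sides of \eqref{eq: Whittaker spectral decomposition} the explicit geometric and spectral computations already established. Since $f_{\eta,\nu} \in S_{\alpha, 1}(N\backslash G; \psi_1)$ for every $\alpha > 0$, one has $P_f \in S(\Gamma\backslash G)$, so the hypotheses of Theorem \ref{thm: Whittaker} are satisfied and \eqref{eq: Whittaker spectral decomposition} holds with $\psi = \psi_2$.

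First I would treat the left-hand side. By Lemma \ref{lem: geometric side},
\[ W_{P_f,\, \psi_2}(1) = \delta_{\psi_1,\, \psi_2}\, |\Lambda_\infty|\, f_{\eta,\nu}(1) + \sum_{c \in \Omega(\Gamma)} Kl_\Gamma(c; \psi_1, \psi_2)\, K(f_{\eta,\nu}, c; \psi_1, \psi_2). \]
The diagonal term vanishes because $f_{\eta,\nu}(1) = 0$ by construction. Inserting the evaluation $K(f_{\eta,\nu}, c; \psi_1, \psi_2) = |c|^{-2}\, \nu(c)\, \widehat\eta(1/c)$ from the geometric-side computation, and observing that the definition $A_{\eta,\nu}(z) = \nu(1/z)\widehat\eta(z)$ gives $A_{\eta,\nu}(1/c) = \nu(c)\widehat\eta(1/c)$, the left-hand side collapses to $\sum_{c \in \Omega(\Gamma)} |c|^{-2}\, Kl_\Gamma(c; \psi_1, \psi_2)\, A_{\eta,\nu}(1/c)$, which is exactly the geometric side of the theorem.

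Next I would treat each spectral component on the right-hand side. For $\pi \in \Pi_d(\Gamma)$, formula \eqref{eq: spectral side, discrete} yields
\[ W_{F_\pi(P_f),\, \psi_2}(1) = c_\Gamma(\pi; \psi_1, \psi_2) \int_{\BCx} \nu(z)\, \widehat\eta\lp \tfrac{1}{z} \rp \EJ_{\pi,\, \psi_1}\lp \tfrac{\kappa}{z} \rp d^\times z, \]
and \eqref{eq: spectral side, Eisenstein} treats each continuous piece $\pi_{\fra}(\mu)$ identically. The sole manipulation required is the substitution $z \mapsto 1/z$, under which $d^\times z = dz/|z|^2$ is invariant; this converts the integrand $\nu(z)\widehat\eta(1/z)\EJ_{\pi,\psi_1}(\kappa/z)$ into $\nu(1/z)\widehat\eta(z)\EJ_{\pi,\psi_1}(\kappa z) = A_{\eta,\nu}(z)\EJ_{\pi,\psi_1}(\kappa z)$, reproducing verbatim the discrete and continuous spectral integrals in the statement.

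Finally I would assemble these three inputs into \eqref{eq: Whittaker spectral decomposition}. The only point warranting care is the legitimacy of evaluating the Whittaker functional term by term against the full spectral expansion of $P_f$, but this is precisely what Theorem \ref{thm: Whittaker} already guarantees: the spectral decomposition of $P_f$ converges in the smooth topology of $S(\Gamma\backslash G)$ by Dixmier--Malliavin, and the functional $\varphi \mapsto W_{\varphi,\, \psi_2}(1)$ is continuous on $S(\Gamma\backslash G)$. Consequently no genuine obstacle remains, and the theorem follows directly by combining Lemma \ref{lem: geometric side}, the geometric-side and spectral-side computations, and the change of variables $z \mapsto 1/z$.
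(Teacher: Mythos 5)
Your proposal is correct and follows exactly the paper's own route: apply Theorem \ref{thm: Whittaker} with $\psi_2$ to $P_{f_{\eta,\nu}}$, use Lemma \ref{lem: geometric side} together with the evaluation $K(f_{\eta,\nu},c;\psi_1,\psi_2)=|c|^{-2}\nu(c)\widehat\eta(1/c)$ and $f_{\eta,\nu}(1)=0$ on the geometric side, and the formulas \eqref{eq: spectral side, discrete}, \eqref{eq: spectral side, Eisenstein} with the inversion $z\mapsto 1/z$ (under which $d^\times z$ is invariant) on the spectral side. The only additions beyond the paper's one-line assembly are details it leaves implicit, and you supply them correctly.
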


\section{An Explicit Kloosterman-Spectral Formula - the Kuznetsov Trace Formula}

In this section, we shall express the Bessel functions in terms of the classical Bessel functions and relate the constants $c_{\Gamma} (\pi; \psi_1, \psi_2)$ to classical quantities.

\subsection{ }

By \cite[Proposition 18.5]{Qi-Bessel}, the Bessel functions $\EJ_{\pi, \, \psi} $ may be expressed explicitly  in terms of the classical Bessel functions. For $\pi \cong \pi_d (s)$, with either $s = i t$ purely imaginary or $ s = t$ on the segment $\lp 0, \frac 1 2 \rp$, and $\psi = \psi_{a}$, with $a \in \BCx$, we have
\begin{align*}
\EJ_{\pi, \, \psi } (z)  =  \frac {2 \pi^2 } {\sin (2\pi s)}  |a^2 z| \lp J_{ s, 2 d} (4 \pi  a \sqrt z) -  J_{- s, - 2 d} (4 \pi  a \sqrt z) \rp,
\end{align*}
where $
	J_{s,   2 d} (z) = J_{- 2 s - d } \lp  z \rp J_{- 2 s + d  } \lp  {\overline z} \rp.
	$ Here $\sqrt z$ is the principal branch of the square root of $z$ and the expression on the right   is independent on the argument of $z$ modulo $2 \pi$.


\subsection{ } \label{sec: compute the constants} We first consider the discrete component $(\pi, V_{\pi}) \subset L^2(\Gamma \backslash G)$ with $\pi \cong \pi_d (s)$, $s = i t$ or $s = t$. The restriction of $V_{\pi} $ on $K$ decomposes into the direct sum of the $(2 l + 1)$-dimensional representations $ \sigma_l $ of $K$ with $l \geqslant |d|$. Furthermore, each representation $\sigma_l$ decomposes into the direct sum of  the one-dimensional weight spaces of weight $q$ with $|q| \leqslant l$. Accordingly, we say that a vector of $V_{\pi}^\infty $ is of $K$-type $(l, q) $ if it lies in the $q$-weight space of $\sigma_l$.\footnote{By convention, the weights $q$ are the eigenvalues for $H = - \frac 1 2 i \otimes \begin{pmatrix}
	i & \\
	& - i
	\end{pmatrix}$ as an element in the complexified Lie algebra $\BC \otimes_{\BR} \mathfrak {su}(2)$. Note that the definition of $K$-type coincides with that in \cite{B-Mo} if  $q$ is changed into $-q$.} Therefore, $V_{\pi}^\infty$ contains a unique vector $\varphi$ of type $(|d|, d) $  normalized such that $\varphi_{\pi}$ has Petersson square norm $1$. 
It is known that $\varphi_{\pi} (g)$ will have a Fourier expansion at   infinity in terms of Jacquet's Whittaker function $W_{\omega}^{s, d}$ (see, for example, \cite{J-L}),
\begin{equation*}
\varphi_{\pi} \lp 
\begin{pmatrix}
1 & z \\
  & 1
\end{pmatrix} 
\begin{pmatrix}
r & \\
   & 1
\end{pmatrix} 
k \rp 
= \sum_{\omega \in \Lambda_{\infty}' } a_{\omega} (\varphi_\pi) \psi_{\omega}   ( z) W_{\omega}^{s, d} ( r, k),
\end{equation*}
with $z \in \BC$, $r \in \BR_+$, $k \in K$. 
Now, to define $W_{\omega}^{s, d}$, we first introduce the function
$$\phi_{s, d} \lp 
\begin{pmatrix}
1 & z \\
& 1
\end{pmatrix} 
\begin{pmatrix}
r & \\
& 1
\end{pmatrix}
k_{v,\, w} \rp = \begin{cases}
r^{ 2 s + 1} v^{2 d}, & \text{ if } d \geq 0, \\
r^{ 2 s + 1} \overline v^{\, - 2 d}, & \text{ if } d < 0.
\end{cases} $$ 
It is readily verified that $\phi_{s, d}  $ is left $  \mu_{s, d} \delta_{\infty}^{\frac 1 2}$-variant, that is, $\phi_{s, d} (\underline {u} \hskip 1 pt \underline {a} g ) = |a|^{ 2 s+1 } [a]^{d} \phi_{s, d} (g) $, with $ \underline u \in N$ and $\underline a \in A$.
For $\Re s > 0$, $W_{\omega}^{s, d} (r, k)$ is given by
\begin{equation*}
W_{\omega}^{s, d} (r, k) = \int_{N} \psi_{\omega}\- (n) \phi_{s, d} (\varw n \underline {r} k) d n,
\end{equation*}
in which the integral converges absolutely, and $W_{\omega}^{s, d} (r, k)$ is defined for all $ s$ via meromorphic continuation, except for the poles that occur in the case $ \omega = 0$. 

For computing the constant $c_{\Gamma} (\pi; \psi_1, \psi_2)$, we shall be concerned only with those nonzero $\omega $ and the values of $W_{\omega}^{s, d} (r, k_{v, 0})$, with $v = e^{\frac 1 2 i \phi}$. With these in mind, we have the following lemma.

\begin{lem} \label{lem: Jacquet's Whittaker function} 
Let $\phi \in \BR/2\pi \BZ$ and $r \in \BR_+$. Set $v = e^{\frac 1 2 i \phi}$ and $a = r e^{i \phi}$. Then, for $\omega \in \Lambda_{\infty}' \smallsetminus \{0\}$, we have
	\begin{equation}\label{eq: Jacquet, omega neq 0}
	W_{\omega}^{s, d} (r, k_{v, 0}) = 
	\frac {2 (-1)^d (2 \pi)^{2 s + |d| + 1}  } {\Gamma (2 s + |d|+1 ) }   |\omega|^{2 s+|d|} |a|^{|d|+1} [ \omega^2 a]^d K_{|d| - 2 s} (4 \pi |\omega a | ).
	\end{equation}
	 
\end{lem}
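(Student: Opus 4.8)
The plan is to unfold the definition of $W_{\omega}^{s,d}$ into an explicit integral over $\BC$ and to recognise it as a product of a classical $J$-Bessel (angular) integral and a Weber--Schafheitlin (radial) integral. The first simplification is to observe that, since $v = e^{\frac12 i\phi}$, one has $\underline r\, k_{v, 0} = \underline a$ in $\PGL_2(\BC)$ with $a = r e^{i\phi}$ (the scalar $\overline v$ disappears in $\PGL_2$); hence the function depends on $(r, k_{v,0})$ only through $a$, and
\begin{equation*}
W_{\omega}^{s, d} (r, k_{v, 0}) = \int_{N} \psi_{\omega}\- (n)\, \phi_{s, d} (\varw\, n\, \underline a)\, d n.
\end{equation*}
Writing $n = \underline u$ with $u \in \BC$, one computes $\varw\, \underline u\, \underline a = \begin{pmatrix} 0 & -1 \\ a & u \end{pmatrix}$, and the whole problem reduces to the Iwasawa decomposition of this matrix.

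Next I would read off the two pieces of data needed to evaluate $\phi_{s,d}$. Since $\phi_{s,d}$ is left $\mu_{s,d}\delta_{\infty}^{\frac12}$-variant and equals $v^{2d}$ (for $d \geq 0$) on $k_{v, w}$, only the height $r'$ and the diagonal $K$-entry $v'$ of the Iwasawa decomposition enter. From the bottom row $(a, u)$ and $\det = a$ one gets the height $r' = |a|/(|a|^2 + |u|^2)$, while $\overline{v'}$ is proportional to $u$; combining these yields, for $d \geq 0$,
\begin{equation*}
\phi_{s, d} (\varw\, \underline u\, \underline a) = \left( \frac{|a|}{|a|^2 + |u|^2} \right)^{2 s + 1 + |d|} \frac{\overline u^{\, 2 d}}{\overline a^{\, d}},
\end{equation*}
the roles of $u$ and $\overline u$ being exchanged when $d < 0$, which is why $|d|$ ultimately appears throughout.

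I would then pass to polar coordinates $u = \rho e^{i\theta}$ and $\omega = |\omega| e^{i\beta}$, remembering that $d u$ is twice Lebesgue measure, so that $\psi_{\omega}\-(u) = e^{-4\pi i \rho |\omega| \cos(\theta + \beta)}$. The $\theta$-integral is evaluated by the classical representation $\int_0^{2\pi} e^{i x \cos\theta} e^{i n\theta}\, d\theta = 2\pi i^{n} J_{n}(x)$, which produces the Bessel function $J_{2|d|}(4\pi |\omega| \rho)$ together with the phase $[\omega]^{2d}$. The surviving radial integral
\begin{equation*}
\int_0^{\infty} \frac{\rho^{2|d| + 1}}{(|a|^2 + \rho^2)^{2 s + 1 + |d|}} J_{2|d|} (4\pi |\omega| \rho)\, d\rho
\end{equation*}
is a standard Weber--Schafheitlin integral evaluating to a constant times $|a|^{|d| - 2 s} (4\pi|\omega|)^{2 s + |d|} K_{|d| - 2 s}(4\pi |\omega| |a|)/\Gamma(2 s + |d| + 1)$; this is valid in the range $\Re s > 0$ where the defining integral of $W_{\omega}^{s,d}$ converges absolutely, and both sides are then continued meromorphically in $s$. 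Assembling the angular and radial contributions and simplifying --- using $[\omega^2 a]^d = [\omega]^{2d} [a]^d$ and $|a|^{2|d| + 1} \overline a^{-d} = |a|^{|d| + 1} a^{d}$ --- gives the asserted expression.

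The routine part is the two classical Bessel integrals; the delicate point will be the bookkeeping of the $K$-component of the Iwasawa decomposition, namely extracting exactly the angular factor $\overline u^{2d}/\overline a^d$ and then matching the resulting phase with the clean expression $[\omega^2 a]^d$ in the statement, together with keeping track of the factor $2$ coming from the normalisation of $d u$ as twice Lebesgue measure.
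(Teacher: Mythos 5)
Your proposal is correct and follows essentially the same route as the paper's proof: unfold the defining integral, Iwasawa-decompose $\varw\,\underline{u}\,\underline{a}$ (the paper keeps $\underline{r}\,k_{v,0}$ separate and substitutes $z \mapsto r z$, which amounts to your observation that $\underline{r}\,k_{v,0} = \underline{a}$ in $\PGL_2(\BC)$), pass to polar coordinates, evaluate the angular integral by $\int_0^{2\pi} e^{i x \cos\theta + i m \theta}\,d\theta = 2\pi i^{m} J_{m}(x)$ and the radial one by the same table integral (Gradshteyn--Ryzhik {\bf 6.565} 4; the paper reserves the name Weber--Schafheitlin for the later $K$-Bessel product integral used in \S \ref{sec: compute the constants}), and finish by meromorphic continuation from $\Re s > 0$. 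The only blemish is your final bookkeeping identity: for $d \geq 0$ one has $|a|^{2|d|+1}\,\overline{a}^{\,-d} = |a|\,a^{d} = |a|^{|d|+1}[a]^{d}$, not $|a|^{|d|+1}a^{d}$; with $[a]^{d}$ in place of $a^{d}$ the phase assembles to $[\omega^{2}a]^{d}$ and all constants match \eqref{eq: Jacquet, omega neq 0} exactly.
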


\begin{proof}
	On expressing $\varw \underline {z} \hskip 1 pt \underline {r} \hskip 1 pt k_{v, 0} $ in the Iwasawa coordinates, changing the variables from $z$ to $r z$ and using the polar coordinates for $z$, we arrive at
	\begin{align*}
	W_{\omega}^{s, d} (r, k_{v, 0}) 
	= 2 r^{1-2 s} e^{i d \phi} \int_{0}^\infty   \frac {x^{2|d| + 1} I_{\omega}^d (r x)}   { (1 + x^2)^{ 2 s + |d| + 1} }    d x,
	\end{align*}
	with
	\begin{equation*}
	I_{\omega}^d (x ) = \int_0^{2 \pi}  e^{ 2 \pi i \Tr (\omega  x e^{i \theta })- 2  i d \theta } d \theta.
	\end{equation*}
    We first recall the integral representation of Bessel,
	\begin{equation*}
	2 \pi i^m J_{m} ( x) = \int_{0}^{2 \pi} e^{ i x \cos \theta + i m \theta} d \theta, \hskip 10 pt m \in \BZ,
	\end{equation*}
	then we find that
	$$ I^d_{\omega} (x) =  2 \pi  (-1)^d [\omega]^{2 d } J_{2|d|} (4\pi  |\omega| x).$$
	Consequently,
	\begin{equation*}
	W_{\omega}^{s, d} (r, k_{v, 0}) =   4 \pi (-1)^d  [\omega]^{2 d} r^{1-2 s} e^{i d \phi} \int_0^\infty \frac {x^{2|d| + 1} J_{2|d|} (4\pi  |\omega| r x)}   { (1 + x^2)^{ 2 s + |d| + 1} }    d x.
	\end{equation*}
	Using the formula \cite[{\bf 6.565} 4]{G-R}
	\begin{equation*}
	\int_0^\infty \frac {x^{\nu+1} J_{\nu} (a x)  } {(1+x^2)^{\mu+1}}  d x = \frac {a^{\mu}} {2^{\mu} \Gamma (\mu + 1) } K_{\nu- \mu} (a), \hskip 10 pt 2 \Re  \mu + \frac 3 2 > \Re \nu > -1 ,\ a > 0,
	\end{equation*}
	we obtain
	\begin{equation*} 
	W_{\omega}^{s, d} (r, k_{v, 0})   = 
	\frac { 4 \pi (-1)^d    (2 \pi |\omega| )^{2 s+|d|} [\omega]^{2 d }  r^{|d|+1 } e^{i d \phi}} {\Gamma (2 s + |d|+1 ) } K_{|d| - 2 s} (4 \pi |\omega | r ).
	\end{equation*}
	Therefore \eqref{eq: Jacquet, omega neq 0} is proven for $\Re s > 0$ and remains valid by analytic continuation.
	
\end{proof}

It will be convenient to work with renormalized Fourier coefficients
\begin{equation*}
A_{\omega} (\varphi_{\pi}) = (2\pi)^{2 s} |\omega|^{2 s} [\omega]^d a_{\omega}  (\varphi_{\pi}) .
\end{equation*}
Then if $\psi_i (z) = \psi_{\omega_i} (z)$, $i = 1, 2$, we see that
\begin{equation*}
W_{\varphi_\pi,\, \psi_i} 
\begin{pmatrix}
a & \\
  & 1
\end{pmatrix}
=   \frac {2 |\Lambda_{\infty}| (-1)^d (2 \pi)^{ |d| + 1}  } {\Gamma (2 s + |d|+1 ) }  A_{\omega_i} (\varphi_{\pi}) |a| |\omega_i a|^{ |d|}  [ \omega_i a]^d   K_{|d| - 2 s} (4 \pi |\omega_i a | ).
\end{equation*}

Let us first compute $c (\pi; \psi_2/\psi_1)$, which is defined by
\begin{equation*}
W_{\varphi_\pi,\, \psi_2} 
\begin{pmatrix}
\omega_1 & \\
 & 1
\end{pmatrix}
= c (\pi; \psi_2/\psi_1)
W_{\varphi_\pi,\, \psi_1}  
\begin{pmatrix}
\omega_2  & \\
& 1
\end{pmatrix}.
\end{equation*}
It follows that
\begin{equation*}
c (\pi; \psi_2/\psi_1) = \frac {A_{\omega_2} (\varphi_{\pi}) |\omega_1|  } {A_{\omega_1}(\varphi_{\pi}) |\omega_2|  }.
\end{equation*}

On the other hand, as $\varphi_{\pi}$ has Petersson square norm $1$, $c_{\psi_1} (\pi, \Gamma)$ is defined by
\begin{equation*}
c_{\psi_1} (\pi, \Gamma) = \int_{ \BCx} W_{\varphi_\pi,\, \psi_1} 
\begin{pmatrix}
a & \\
& 1
\end{pmatrix}
\overline { W_{\varphi_\pi,\, \psi_1} }
\begin{pmatrix}
a & \\
& 1
\end{pmatrix} 
d^\times a.
\end{equation*}
We first let $s = it$. Integrating in the polar coordinates, we obtain
\begin{align*}
c_{\psi_1} (\pi, \Gamma) =     \frac {8 |\Lambda_{\infty}|^2 (2\pi)^{2 |d| + 3  } } {|\Gamma (2 i t + |d| + 1)|^2 } & |A_{\omega_1} (\varphi)|^2 |\omega_1|^{2|d|} \\
& \int_{0}^\infty r^{2 |d|+1} K_{|d| - 2 i t} (4 \pi |\omega_1| r) K_{|d| + 2 i t} (4 \pi |\omega_1| r)  d  r.
\end{align*}
As a special case of the Weber-Schafheitlin formula for $K$-Bessel functions (see \cite[{\bf 6.576} 4]{G-R}), we have
\begin{align*}
\int_0^\infty & x^{\, \rho - 1} K_{\nu} (a x) K_{\mu} (a x) d x \\
& =   \frac 
{2^{\, \rho-3}\Gamma \lp \frac 1 2 \lp  \rho + \nu + \mu \rp \rp
\Gamma \lp \frac 1 2 \lp   \rho + \nu - \mu \rp \rp
\Gamma \lp \frac 1 2 \lp   \rho - \nu + \mu \rp \rp
\Gamma \lp \frac 1 2 \lp   \rho - \nu - \mu \rp \rp}
{a^{\, \rho}\Gamma (\rho)}, \\
& \hskip 205 pt   \Re \rho > |\Re \nu | + |\Re \mu |, \ a > 0.
\end{align*}
This yields
\begin{equation*}
c_{\psi_1} (\pi, \Gamma) = 
\frac {2 \pi |\Lambda_{\infty}|^2 |A_{\omega_1} (\varphi)|^2} 
{(2|d|+1) |\omega_1|^2 }.
\end{equation*}
Therefore, we get
\begin{equation*}
c_{\Gamma} (\pi; \psi_1, \psi_2) = \frac {|\Lambda_{\infty}|^2 A_{\omega_2}(\varphi_{\pi}) \overline {A_{\omega_1 } (\varphi_{\pi})}   } {2 \pi (2|d|+1) |\omega_1 \omega_2| }.
\end{equation*}
Similarly, when $s = t$ and $d = 0$, with $t \in \lp 0, \frac 1 2 \rp$, we have
\begin{align*}
c_{\psi_1} (\pi, \Gamma) 
= \frac {2 \pi |\Lambda_{\infty}|^2 |A_{\omega_1} (\varphi)|^2  } {   G_{t, 0} |\omega_1|^{ 2}  }   ,
\end{align*}
with 
\begin{equation*}
G_{t, 0} =   \frac  { \Gamma (1+2  t   ) } {  \Gamma (1 -2 t  )   }.
\end{equation*}
Hence 
\begin{equation*}
c_{\Gamma} (\pi; \psi_1, \psi_2) = \frac {A_{\omega_2}(\varphi_{\pi}) \overline {A_{\omega_1 } (\varphi_{\pi})}   } {2 \pi  G_{t, 0} |\omega_1 \omega_2| }.
\end{equation*}
Combining these, if we simply put $G_{it, d} = 1$, then
\begin{equation*}
c_{\Gamma} (\pi; \psi_1, \psi_2) = \frac {|\Lambda_{\infty}|^2 A_{\omega_2}(\varphi_{\pi}) \overline {A_{\omega_1 } (\varphi_{\pi})}   } {2 \pi (2|d|+1) G_{s, d} |\omega_1 \omega_2| }.
\end{equation*}

\subsection{ } \label{sec: compute constants for Eisenstein series} For the continuous spectrum, we fix a cusp $\fra$ and let $\mu (z) = \mu_{s, d} (z) = [z]^d |z|^{2 s}$, with $s = i t$ and $m_{\fra} | d$.
Choose $f \in V_{\fra} (\mu)^{\infty}$ to be
\begin{equation*}
f (g) = \phi_{ s, d} (g_{\fra} g).
\end{equation*}
We have $\langle f, f \rangle_{\fra} = 1/ (2|d|+1)$. With $f$ is associated the Eisenstein series $E_{\fra} (g; f, \mu)$. To describe the Fourier coefficients of this Eisenstein series we need some notations. 

For each cusp $\fra $ let $\Omega_{\fra} (\Gamma) = \left\{ c \in \BCx : g_{\fra} \Gamma \cap N \varw \underline {c} N \neq \O , \, \underline c \in A \right\} $, and for each $c \in \Omega_{\fra} (\Gamma) $ define $\Gamma_c^{\fra} = g_{\fra} \Gamma \cap N \varw \underline c N $. Then $\Gamma^{\fra}_c$
is left invariant under $g_{\fra} \Gamma_{\fra} g_{\fra}\-$ and right invariant under $\Gamma_{\infty}$. For each $\gamma \in \Gamma^{\fra}_c$ we write
\begin{equation*}
\gamma = n_1 (\gamma) \varw \underline c n_2 (\gamma),
\end{equation*}
with $n_1 (\gamma), n_2 (\gamma) \in N$. 

For $c \in \Omega_{\fra} (\Gamma) $ we define   the Ramanujan sum $Kl^{\fra}_{\Gamma} (c; 1, \psi) $ by
\begin{equation*}
Kl^{\fra}_{\Gamma} (c; 1, \psi) =  \sum_{\gamma \in  g_{\fra} \Gamma_{\fra} g_{\fra}\- \backslash \Gamma^{\fra}_c / \Gamma_{\infty} } \psi (n_2 (\gamma)),
\end{equation*}
and for $\omega \in \Lambda_{\infty}'$ we define the Kloosterman-Selberg zeta series
\begin{equation*}
Z_{\Gamma}^{\fra} (\mu ; 0, \omega) = \sum_{c \in \Omega_{\fra} (\Gamma) }  \frac { Kl^{\fra}_{\Gamma} (c; 1, \psi_{\omega}) } {\mu  (c) \delta_{\infty} (c)^{\frac 1 2}},
\end{equation*}
if $\Re \mu $ is sufficiently large. Recall that $ \delta_{\infty} (c) = |c|^2 $ is the modulus character. These series $Z_{\Gamma}^{\fra} (\mu ; 0, \omega)$ have meromorphic continuation to the whole complex plane and have no poles on the line $\Re \mu = 0$.

We may now describe the Fourier expansion of the Eisenstein series $E_{\fra} (g ; f, \mu)$. For 
$ g = \begin{pmatrix}
	1 & z \\
	& 1
\end{pmatrix} 
\begin{pmatrix}
	r & \\
	& 1
\end{pmatrix} 
k$, we have
\begin{align*}
E_{\fra} (g ; f, \mu) = \delta_{\fra \infty} m_{\infty} \phi_{s, d} (   g)
& + \frac 1 {|\Lambda_{\infty}| } Z_{\Gamma}^{\fra} (\mu ; 0, 0) W^{s, d}_0 (r, k ) \\
& + \frac 1 {|\Lambda_{\infty}| } \sum_{ \omega \in \Lambda_{\infty}' \smallsetminus \{0\} } \psi_{\omega} (z) Z_{\Gamma}^{\fra} (\mu ; 0, \omega) W^{s, d}_{\omega} (r, k ),
\end{align*}
where the first term exists only if $\fra = \infty$. 

Let $\psi_1   = \psi_{\omega_1}  $ and $\psi_2  = \psi_{\omega_2}  $, with $\omega_1, \omega_2 \in \Lambda_{\infty}' \smallsetminus \{0\}$. From Lemma \ref{lem: Jacquet's Whittaker function}, for $i = 1, 2$, we have 
\begin{align*}
W_{E_{\fra} ( \cdot ; f,\, \mu),\, \psi_i} 
\begin{pmatrix}
a & \\
& 1
\end{pmatrix} = & \frac {2 (-1)^d (2 \pi)^{2 s + |d| + 1}  } {  \Gamma (2 s + |d|+1 ) } \\
& Z_{\Gamma}^{\fra} (\mu ; 0, \omega_i) |\omega_i|^{2 s+|d|} |a|^{|d|+1} [ \omega_i^2 a]^d K_{|d| - 2 s} (4 \pi |\omega_i a | ).
\end{align*}
Then we get
\begin{equation*}
c (\pi_{\fra} (\mu) ; \psi_2 / \psi_1 ) = 
\frac { Z_{\Gamma}^{\fra} (\mu ; 0, \omega_2) |\omega_2|^{2 s - 1} [\omega_2]^d } {Z_{\Gamma}^{\fra} (\mu ; 0, \omega_1)  |\omega_1|^{2 s - 1} [\omega_1]^d }
\end{equation*}
and
\begin{equation*}
c_{\psi_1} (\pi_{\fra} (\mu), \Gamma) = 
\frac {2 \pi |Z_{\Gamma}^{\fra} (\mu ; 0, \omega_1)|^2 } {  |\omega_1|^2 }.
\end{equation*}
Therefore,
\begin{equation*}
c_{\Gamma} (\pi_{\fra} (\mu) ; \psi_1, \psi_2) = \frac   {Z_{\Gamma}^{\fra} (\mu ; 0, \omega_2) \overline {Z_{\Gamma}^{\fra} (\mu ; 0, \omega_1) }   } {2 \pi     |\omega_1 \omega_2| } \mu \lp \frac {\omega_2} {\omega_1} \rp.
\end{equation*}

\subsection{ }

In conclusion, with the computations above, the Kuznetsov trace formula in Theorem \ref{thm: main} then follows from Theorem \ref{thm: first formula}, with the choice of weight function $F (z) = A_{\eta, \nu} \lp z / \omega_1 \omega_2 \rp |z/ \omega_1 \omega_2|$. It should be remarked that $F (z)$ of this form actually exhaust all  functions in $C^{\infty}_c (\BCx)$ if one let $\eta$ and $\nu$ vary.

\bibliographystyle{alphanum}

\bibliography{references}

\def\cprime{$'$} \def\cprime{$'$}
\begin{thebibliography}{GGPS}

\bibitem[BM1]{BM-Kuz-Spherical}
R.~W. Bruggeman and R.~J. Miatello.
\newblock Sum formula for {${\rm SL}_2$} over a number field and {S}elberg type
  estimate for exceptional eigenvalues.
\newblock {\em Geom. Funct. Anal.}, 8(4):627--655, 1998.

\bibitem[BM2]{BM-Kuz-TR}
R.~W. Bruggeman and R.~J. Miatello.
\newblock Sum formula for {$\rm SL_2$} over a totally real number field.
\newblock {\em Mem. Amer. Math. Soc.}, 197(919):vi+81, 2009.

\bibitem[BM3]{B-Mo}
R.~W. Bruggeman and Y.~Motohashi.
\newblock Sum formula for {K}loosterman sums and fourth moment of the
  {D}edekind zeta-function over the {G}aussian number field.
\newblock {\em Funct. Approx. Comment. Math.}, 31:23--92, 2003.

\bibitem[Bru1]{Bruggeman-1}
R.~W. Bruggeman.
\newblock Fourier coefficients of cusp forms.
\newblock {\em Invent. Math.}, 45(1):1--18, 1978.

\bibitem[Bru2]{Bruggeman-2}
R.~W. Bruggeman.
\newblock {\em Fourier coefficients of automorphic forms}.
\newblock Lecture Notes in Mathematics, Vol. 865. Springer-Verlag, Berlin-New
  York, 1981.

\bibitem[CPS]{CPS}
J.~W. Cogdell and I.~Piatetski-Shapiro.
\newblock {\em The arithmetic and spectral analysis of {P}oincar\'e series}.
\newblock Perspectives in Mathematics, Vol. 13. Academic Press, Inc., Boston,
  MA, 1990.

\bibitem[DI]{D-I-Kuz}
J.-M. Deshouillers and H.~Iwaniec.
\newblock Kloosterman sums and {F}ourier coefficients of cusp forms.
\newblock {\em Invent. Math.}, 70(2):219--288, 1982/83.

\bibitem[DM]{Dix-Mal}
J.~Dixmier and P.~Malliavin.
\newblock Factorisations de fonctions et de vecteurs ind\'efiniment
  diff\'erentiables.
\newblock {\em Bull. Sci. Math. (2)}, 102(4):307--330, 1978.

\bibitem[EGM]{EGM}
J.~Elstrodt, F.~Grunewald, and J.~Mennicke.
\newblock {\em Groups acting on hyperbolic space}.
\newblock Springer Monographs in Mathematics. Springer-Verlag, Berlin, 1998.

\bibitem[Eva]{Evans-PDE}
L.~C. Evans.
\newblock {\em Partial differential equations}.
\newblock Graduate Studies in Mathematics, Vol. 19. American Mathematical
  Society, Providence, RI, second edition, 2010.

\bibitem[GGPS]{G-G-PS}
I.~M. Gel{\cprime}fand, M.~I. Graev, and I.~I. Pyatetskii-Shapiro.
\newblock {\em Representation theory and automorphic functions}.
\newblock W. B. Saunders Co., Philadelphia, 1969.

\bibitem[GJ]{Gelbart-Jacquet}
S.~Gelbart and H.~Jacquet.
\newblock Forms of {${\rm GL}(2)$} from the analytic point of view.
\newblock In {\em Automorphic forms, representations and {$L$}-functions,
  {P}art 1}, Proc. Sympos. Pure Math., XXXIII, pages 213--251. Amer. Math.
  Soc., Providence, R.I., 1979.

\bibitem[GR]{G-R}
I.~S. Gradshteyn and I.~M. Ryzhik.
\newblock {\em Table of integrals, series, and products}.
\newblock Elsevier/Academic Press, Amsterdam, {S}eventh edition, 2007.

\bibitem[HC]{Harish-Chandra}
Harish-Chandra.
\newblock {\em Automorphic forms on semisimple {L}ie groups}.
\newblock Lecture Notes in Mathematics, No. 62. Springer-Verlag, Berlin-New
  York, 1968.

\bibitem[Iwa]{Iw-Spectral}
H.~Iwaniec.
\newblock {\em Spectral methods of automorphic forms}, volume~53 of {\em
  Graduate Studies in Mathematics}.
\newblock American Mathematical Society, Providence, RI, {S}econd edition,
  2002.

\bibitem[JL]{J-L}
H.~Jacquet and R.~P. Langlands.
\newblock {\em Automorphic forms on {${\rm GL}(2)$}}.
\newblock Lecture Notes in Mathematics, Vol. 114. Springer-Verlag, Berlin-New
  York, 1970.

\bibitem[Kuz]{Kuznetsov}
N.~V. Kuznetsov.
\newblock {P}etersson's conjecture for cusp forms of weight zero and {L}innik's
  conjecture. {S}ums of {K}loosterman sums.
\newblock {\em Math. Sbornik}, 39:299--342, 1981.

\bibitem[Lan]{Langlands-Eisenstein}
R.~P. Langlands.
\newblock {\em On the functional equations satisfied by {E}isenstein series}.
\newblock Lecture Notes in Mathematics, Vol. 544. Springer-Verlag, Berlin-New
  York, 1976.

\bibitem[LG]{B-Mo2}
H.~Lokvenec-Guleska.
\newblock {\em {S}um {F}ormula for $\mathrm{SL}_2$ over {I}maginary {Q}uadratic
  {N}umber {F}ields}.
\newblock Ph.D. Thesis. Utrecht University, 2004.

\bibitem[Mag]{Maga-Kuz}
P.~Maga.
\newblock A semi-adelic {K}uznetsov formula over number fields.
\newblock {\em Int. J. Number Theory}, 9(7):1649--1681, 2013.

\bibitem[MW]{M-W-Kuz}
R.~Miatello and N.~R. Wallach.
\newblock Kuznetsov formulas for real rank one groups.
\newblock {\em J. Funct. Anal.}, 93(1):171--206, 1990.

\bibitem[Pro]{Pro-Kuz}
N.~V. Proskurin.
\newblock {S}ummation formulas for general {K}loosterman sums.
\newblock {\em J. Soviet Math.}, 18:925--950, 1982.

\bibitem[Qi]{Qi-Bessel}
Z.~Qi.
\newblock Theory of fundamental {B}essel functions of high rank.
\newblock {\em arXiv:1612.03553, to appear in Mem. Amer. Math. Soc.}, 2016.

\bibitem[Sel]{Selberg}
A.~Selberg.
\newblock On the estimation of {F}ourier coefficients of modular forms.
\newblock In {\em Proc. {S}ympos. {P}ure {M}ath., {V}ol. {VIII}}, pages 1--15.
  Amer. Math. Soc., Providence, R.I., 1965.

\bibitem[Sha]{Shalika-MO}
J.~A. Shalika.
\newblock The multiplicity one theorem for {${\rm GL}_{n}$}.
\newblock {\em Ann. of Math. (2)}, 100:171--193, 1974.

\end{thebibliography}

\end{document}